\providecommand{\U}[1]{\protect\rule{.1in}{.1in}}
\providecommand{\U}[1]{\protect\rule{.1in}{.1in}}
\newtheorem{theorem}{Theorem}
\newtheorem{definition}[theorem]{Definition}
\newtheorem{example}[theorem]{Example}
\newtheorem{lemma}[theorem]{Lemma}
\newenvironment{proof}[1][Proof]{\noindent\textbf{#1.} }{\ \rule{0.5em}{0.5em}}
\DeclareMathOperator{\spn}{span}
\DeclareMathOperator{\sinc}{sinc}
\DeclareMathOperator{\diag}{diag}
\begin{document}

\title{Approximation of Weakly Singular Integral Equations by Sinc Projection Methods}
\author{Khadijeh Nedaiasl\thanks{Institute for Advanced Studies in Basic Sciences,
Zanjan, Iran, e-mail: \texttt{ nedaiasl@iasbs.ac.ir, knedaiasl85@gmail.com}}}

\maketitle

\begin{abstract}
In this paper, two numerical schemes for a nonlinear integral equation of Fredholm type with weakly singular kernel are studied. These numerical methods blend collocation, convolution   approximations based on sinc basis functions with iterative schemes like steepest descent  and  Newton method that involve solving a nonlinear system of equations. The exponential rate of convergence for the convolution scheme is obtained and also the collocation methods are analyzed appropriately. Numerical experiments have been performed to illustrate the sharpness of the theoretical estimates and the sensitivity of the solution with respect to some parameters in the equation. The comparison between the schemes indicates that sinc-convolution method is more effective. 
\end{abstract} 

\vspace{1em} \noindent\textbf{Keywords:} Fredholm integral equation, Urysohn integral operator, weak singularity,  convolution method, collocation
method.

\vspace{1em} \noindent\textbf{Mathematics Subject Classification (2010):} 45B05, 45E99, 65J15, 65R60.

\section{Introduction}
The aim of this paper is  to study of the numerical solution of nonlinear Fredholm integral equation
\begin{equation}\label{asli}
u(t)= g(t) + \int^{b}_{a}  f(\vert t-s \vert) k(t,s)\psi (s, u(s))  \mathrm{d}s, \quad - \infty < a \leq t \leq b < \infty, 
\end{equation} 
where $u(t)$ is an unknown function to be determined and $k(t,s)$, $\psi(s,u)$ and  $g(t)$ are given functions. Eq. \eqref{asli} is an algebraic weakly singular integral equation, whenever $f(t)$  is defined by $t^{-\lambda}$, $0<\lambda<1$. 
A more general type of this equation, so-called Urysohn weakly singular integral equation \cite{urysohn}, is defined as
 \begin{equation}\label{uri}
 u(t)= g(t) + \int^{b}_{a}  f(\vert t-s \vert) k(t,s, u(s))  \mathrm{d}s, \quad - \infty < a \leq t \leq b < \infty. 
 \end{equation}  
Linear and nonlinear integral equations with weakly singular kernels arise in various applications such as astrophysics \cite{amosov}. In the potential theory, the boundary integral equations of the Laplace and Helmholtz operators are described as a linear combination of weakly singular operators \cite{Nedelec}.

It is well-known that the solution of Eq. \eqref{asli} has some singularities near the boundaries. This is an important note that should be considered in the designation of the numerical method. There is a considerable interest in the numerical analysis of linear and nonlinear integral equations with weakly singular kernels. 
This interest has been followed by some projection schemes such as Galerkin, collocation and product integration methods with singularity preserving approaches  which find an approximation with  optimal error bound \cite{scientific, mandal2019superconvergence, allouch, cao1994, zhong, GRAMMONT2016309, chen}. It is worth mentioning that the numerical solution of Eq. \eqref{asli} with the smooth kernel is comprehensively studied, for more information see \cite{atkinson1992survey,  may12, neda}.

{In the current study, we propose two reliable schemes in order to achieve appropriate approximations for the nonlinear weakly singular  integral equations. The methods are designed such that compensate the singular behavior of the solution. 
	For the sake of good comparison, we attempt to present two algorithms based on sinc approximation method. In what follows, we will elucidate these schemes with relevant characteristics and convergence rates. }
 
% It is known that the approximation based on sinc function are appropriate for the singular integral equation equations due to compensation of the endpoints singularities. So,\
 The first objective of this
 study is to investigate the analysis of sinc-collocation method for nonlinear weakly singular Fredholm integral equation.  In \cite{MALEKNEJAD20151}, the authors have studied this subject and obtained the rate of convergence $\mathcal{O}(\Vert A^{-1} \Vert_{\infty}(3+\log(N))\sqrt{N}\exp(-\sqrt{\pi d \lambda N}))$. Here, we consider the sinc-collocation with a different basis functions by adding two fractional polynomials $(t-a)^{\lambda}$ and $(b-t)^{\lambda}$ in the finite dimensional space utilized as the  approximation space.  We propose an error analysis for the approximate solution chosen from an appropriate finite dimensional space built by the shifted sinc functions and considering the singularity properties of the exact solution.  However, we encounter a term $\xi_{N}$ in the upper bound which depends on $N$ and is unavoidable due to the nature of the projection methods.

For the second objective, we present and analyze a numerical scheme using 
 the key idea of how to approximate the following nonlinear convolution 
\[
r(x) = \int_{a}^{x} k(x, x-t, t)\mathrm{d}t, 
\] 
 which is called sinc-convolution method. Here, we replace the variables with the single exponential transformation introduced in Section \ref{pel}. 
  
%The paper is structured as follows. In Section \ref{class}, we  review some popular numerical schemes for the solution of weakly singular integral equations. 
In order to make the paper self-contained, the basic properties of the sinc approximation  method are introduced in Section \ref{pel}. Two numerical schemes based on sinc-collocation and -convolution methods will be studied in Section \ref{twoscheme}. Furthermore, this section contains a complete convergence analysis for the proposed methods. Finally, Section \ref{numeric} is devoted to some numerical experiments in order to show the consistency with the theoretical estimates of the convergence rate.  

 In this work, we present numerical schemes based on sinc approximation; sinc-convolution and  sinc-collocation methods  for the nonlinear Fredholm weakly singular integral equations. Sinc-convolution is introduced in \cite{stenger1995collocating} to collocate indefinite integrals of convolution type, and it could be interpreted as a special type of Nystr\"{o}m method. 
 It will be shown that this method has the exponential rate of convergence. For a comprehensive study of sinc-convolution method and its applications to different kinds of equations, we refer to \cite{stenger2012numerical, stenger2016handbook}. Furthermore, sinc-collocation method and its features for nonlinear integral equation are studied in this paper. 
Equations (\ref{asli}) and (\ref{uri}) can be expressed in the operator form as
 \begin{equation}\label{asli22}(I-\mathcal{K}_{i})u=g, \quad i=1,2,\end{equation} 
 where $(\mathcal{K}_{1}u)(t)=\int_a^b f(| t-s|)k(t,s) \psi (s,u(s))\mathrm{d}s$ and $(\mathcal{K}_{2}u)(t)=\int_a^b f(| t-s|)k(t,s,u(s))\mathrm{d}s$. 
 {The operators are defined on the Banach space $X=\textbf{H}^{\infty}({\mathcal{D}})\bigcap C(\bar{\mathcal{ D}})$. In this notation, $\mathcal{ D}\subset\mathbb{C}$ is a simply connected domain which satisfies $(a,b)\subset\mathcal{ D}$ and $\textbf{H}^{\infty}(\mathcal{D})$ denotes the family of all functions $f$ that are analytic in the domain $\mathcal{ D}$ and  have finite the uniform (superimum) norm.}
We suppose that the unknown solution $u(t)$ to be determined is geometrically isolated \cite{krasnosel2012approximate, MALEKNEJAD20113292}, which means that there is some balls \[ \mathfrak{B}(u,r) =\{x \in X : \|x-u\| \leq  r\},\] with
$r>0$ and Eq. \eqref{asli} has  the only solution $u$. 

\section{Preliminaries}\label{pel}
In order to make the paper self-contained, some basic definitions and theorems on sinc function, sinc interpolation and quadrature are presented.
\subsection{Sinc interpolation}
 The sinc function on the real line, $\mathbb{R}$ is defined by 
 \begin{equation*}\sinc
(t)=\left\{\begin{array}{lr}\frac{\sin(\pi t)}{\pi t},& t\neq0,\\
1, & t=0.\end{array}\right.\end{equation*}
It is well-known that  a function $f$ with suitable smoothness properties could be approximated by sinc functions as
\begin{equation}\label{tagrib}f(t)\approx \sum_{j=-N}^N f(jh)S(j,h)(t),\quad
t\in\mathbb{R}, \end{equation}
wherein the basis function $S(j,h)(t)$
is given  by
\begin{equation}\label{sinc function}S(j,h)(t)=\sinc (\frac{t}{h}-j), \quad j \in \mathbb{Z}, \end{equation}
and $h$ is a step size appropriately picked depending on a given positive
integer $N$,  and \eqref{sinc function} is called $j
\textrm{th}$ sinc function.
Eq. \eqref{tagrib} is adjusted to approximate
on general intervals with the accompanying a  variable transformations
$t=\varphi(x)$. 
Appropriate single exponential and double exponential  transformations could be applied \cite{stenger2012numerical, tanaka} as the converting function $\varphi(x)$. The single exponential transformation and its inverse is given as below
\begin{equation*}\varphi_{a,b}
(x)=\frac{b-a}{2}\tanh(\frac{x}{2})+\frac{b+a}{2},
\end{equation*}
\begin{equation*}\phi_{a,b}(t)=\log(\frac{t-a}{b-t}),\end{equation*}
respectively. 
The superscripts $a$ and $b$ in the transformations notation play an important role in the application of sinc-collocation method for weakly singular integral equations.
 The strip domain is introduced in order to define a suitable function space 
\[\mathcal{D}_{d}=\big\{z\in \mathbb{C } : |\Im z|<d\big\},\] for
some $d>0$. When it is incorporated with this
transformation, then we consider the
transformed domain
\begin{equation*}\varphi(\mathcal{D}_{d})=\Big\{z\in \mathbb{C } :\Big|\arg(\frac{z-a}{b-z})\Big|<d\Big\}.\end{equation*}
 The following definitions and theorems are treated for the details  of the procedure.
 %---------------------------------------------------
% \begin{definition}(\cite{stenger2012numerical})
% 	Let $\mathcal{D}$ be a bounded and simply connected domain. Then $\mathbf{H}^{\infty}(\mathcal{D})$ denotes the family of analytic functions $f$  on $\mathcal{D}$ such that the norm $\Vert f \Vert_{\mathbf{H}^{\infty}(\mathcal{D})} $ is finite, where 
% 	\[ \Vert f \Vert_{\mathbf{H}^{\infty}(\mathcal{D})} = \sup_{z \in \mathcal{D}}\vert f(z) \vert. \]
% 	
% \end{definition}
%.....................................
\begin{definition}(\cite{stenger2012numerical})
Let $\alpha$ and  $C$  be  positive constants, and let $\mathcal{D}$ be a bounded and simply connected domain which satisfies $(a,b)\subset \mathcal{D}$. Then
$\mathcal{L}_{\alpha}(\mathcal{D})$ denotes the set of all functions $f\in \mathbf{H}^{\infty}(\mathcal{D})$ which satisfy
\begin{equation}\label{L}
|f(z)|\leq C{|Q(z)|}^{\alpha},\end{equation} for all $z$ in $\mathcal{D}$ where $Q(z)=(z-a)(b-z)$.
\end{definition}
%...............................................................
The next theorem clarify the exponential convergence rate of the sinc
interpolation.
\begin{theorem}\label{interpolationse}(\cite{Okayamanumerische}) Let $f\in \mathcal{L}_{\alpha}(\varphi_{a,b}(\mathcal{D}_{d}))$ for $d$ with
$0<d<\pi$.
Suppose that 
$h$ be given by the formula $h=\sqrt{\frac{\pi d}{\alpha N}}$, wherein $N$ is a positive integer. %furthermore, let  $N$ be the positive integer
%\begin{equation*}
%\left\{\begin{array}{lcr}M=n, & N = \lceil \frac{\alpha n}{\beta} \rceil & (\text{if}\, \mu = \alpha),\\
%N=n, &M= \lceil \frac{\beta n}{\alpha} \rceil & \, (\text{if} \, \mu =\beta),\end{array}\right.
%\end{equation*}
%respectively. 
Then there is a constant $C$ independent of $N$, such that
\begin{equation*}
\Big \Vert f(t)-\sum_{j=-N}^N
f(\varphi_{a,b}(jh))S(j,h)(\phi(t))\Big \Vert \leq C \sqrt{N}
\exp(-\sqrt{\pi d \alpha N}), 
\end{equation*}
where 
\[C=\frac{2K(b-a)^{2\alpha}}{\alpha}\Big[ \frac{2}{\pi d (1-e^{-2\sqrt{\pi d \alpha}})(\cos(\frac{d}{2}))^{2\alpha}}+\sqrt{\frac{\alpha}{\pi d}} \Big].\]
\end{theorem}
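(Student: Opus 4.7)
The plan is to follow the classical two-term error split that is standard in sinc-approximation theory, transforming to the strip $\mathcal{D}_d$ and balancing the interior sinc interpolation error against the endpoint truncation error. First I would set $F(x)=f(\varphi_{a,b}(x))$ and note that $F$ is analytic on $\mathcal{D}_d$ since $\varphi_{a,b}$ maps $\mathcal{D}_d$ into $\varphi_{a,b}(\mathcal{D}_d)$. Writing $x=\phi_{a,b}(t)$, the total error is bounded by
\[
\Bigl|F(x)-\sum_{j=-\infty}^{\infty}F(jh)S(j,h)(x)\Bigr|+\sum_{|j|>N}|F(jh)|\,|S(j,h)(x)|,
\]
the first being the full-line discretization error and the second the truncation error.

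For the discretization error I would invoke the classical Stenger estimate for functions analytic in $\mathcal{D}_d$ whose appropriate boundary integral is finite; roughly, this gives a bound of the form $\text{const}\cdot e^{-\pi d/h}/(1-e^{-2\pi d/h})$. The constant prefactor needs to be tracked through the $\mathcal{L}_\alpha$ hypothesis, using $|f(z)|\le K|Q(z)|^\alpha$ with $|Q(z)|\le\bigl((b-a)/(2\cos(d/2))\bigr)^2$ along the boundary of $\varphi_{a,b}(\mathcal{D}_d)$, which is where the factor $(\cos(d/2))^{2\alpha}$ in the stated $C$ originates.

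For the truncation error, the key is to exploit the endpoint decay encoded in the $\mathcal{L}_\alpha$ condition. On the real line one computes
\[
|Q(\varphi_{a,b}(x))|=(\varphi_{a,b}(x)-a)(b-\varphi_{a,b}(x))=\frac{(b-a)^2}{(1+e^x)(1+e^{-x})},
\]
which gives $|F(jh)|\le K(b-a)^{2\alpha} e^{-\alpha|jh|}$ up to the right constant. Since $|S(j,h)(x)|\le 1$, summing the geometric tails over $|j|>N$ yields a bound of order $e^{-\alpha N h}/(1-e^{-\alpha h})$, and a crude estimate $(1-e^{-\alpha h})^{-1}\le 1/(\alpha h)+ \tfrac12$ combined with $h=\sqrt{\pi d/(\alpha N)}$ produces the $\sqrt{N}$ factor and the $\sqrt{\alpha/(\pi d)}$ term in the stated constant.

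Finally, I would balance both errors by the prescribed choice $h=\sqrt{\pi d/(\alpha N)}$, so that $e^{-\pi d/h}=e^{-\alpha N h}=e^{-\sqrt{\pi d\alpha N}}$, and collect the two contributions into the single constant $C$ displayed in the theorem. The main obstacle is not the strategy itself but the bookkeeping: one must verify that $F=f\circ\varphi_{a,b}$ lies in the Hardy-type class for which the Stenger interpolation bound applies (this uses analyticity of $\varphi_{a,b}$ on $\mathcal{D}_d$ plus $f\in\mathbf{H}^\infty$) and then carefully propagate the constant $K$, the factor $(b-a)^{2\alpha}$, and the geometric-series constant so that the final $C$ matches the expression in the statement rather than an opaque dimensional constant.
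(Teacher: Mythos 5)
This theorem is imported verbatim from the cited reference and the paper gives no proof of it, so the only meaningful comparison is with the standard argument in that source — which is exactly the two-term split you describe (infinite cardinal-series discretization error plus tail truncation error, balanced by $h=\sqrt{\pi d/(\alpha N)}$). Your outline is essentially correct, and your accounting of where each piece of the constant comes from is accurate: the truncation tail gives $\frac{2K(b-a)^{2\alpha}}{\alpha}\sqrt{\alpha/(\pi d)}\,\sqrt{N}\,e^{-\sqrt{\pi d\alpha N}}$, and the strip estimate gives the first bracketed term.

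One detail to fix in the discretization step: the uniform bound $|Q(z)|\le\bigl((b-a)/(2\cos(d/2))\bigr)^2$ on the boundary is not what you need there, because Stenger's cardinal-series estimate requires membership in $\mathbf{B}(\mathcal{D}_d)$, i.e.\ finiteness of the boundary integral $N(F,\mathcal{D}_d)=\int_{\mathbb{R}}\bigl(|F(x+id)|+|F(x-id)|\bigr)\,\mathrm{d}x$, and a mere sup bound on $|F|$ along $\Im z=\pm d$ does not make that integral finite. You need the $x$-dependent boundary estimate $|Q(\varphi_{a,b}(x+iy))|\le (b-a)^2\,e^{x}/\bigl((1+e^{x})^{2}\cos^{2}(y/2)\bigr)$ (which follows from $|1+e^{x+iy}|^{2}\ge(1+e^{x})^{2}\cos^{2}(y/2)$), so that $\int_{\mathbb{R}}|F(x\pm id)|\,\mathrm{d}x\le K\bigl((b-a)/\cos(d/2)\bigr)^{2\alpha}\int_{\mathbb{R}}e^{-\alpha|x|}\,\mathrm{d}x=\frac{2K(b-a)^{2\alpha}}{\alpha\cos^{2\alpha}(d/2)}$; feeding this into the bound $N(F,\mathcal{D}_d)e^{-\pi d/h}/\bigl(\pi d(1-e^{-2\pi d/h})\bigr)$ and using $1-e^{-2\pi d/h}\ge 1-e^{-2\sqrt{\pi d\alpha}}$ for $N\ge1$ reproduces the first term of the stated $C$ exactly. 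Since you already wrote down the correct decaying identity for $|Q(\varphi_{a,b}(x))|$ on the real line, this is a local correction rather than a change of strategy.
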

%%%%%%%%%%%%%%%%%%%%%%%%%%%%%%%%%%%%%%%%%%%%%%%%%%%%%%%%%%%%%%%%%%%%
According to Theorems \ref{interpolationse}, in order to attain an exponential convergence, the approximated function should exist in $\mathcal{L}_{\alpha}(\mathcal{D})$. By the condition \eqref{L}, such a function is expected to be zero
at the endpoints, which is too  restrictive in practice.
However, it can be reduced to the following function space $\mathcal{M}_{\alpha}(\mathcal{D})$
with $0<\alpha\leq1$ and
$0<d<\pi$.
%...................................................
\begin{definition}(\cite{stenger2012numerical}) Let $\mathcal{D}$ be a simply connected and bounded domain which contains $(a,b)$.
The family $\mathcal{M}_{\alpha}(\mathcal{D})$ contains all  analytical
functions which are continuous on $\bar{\mathcal{D}}$ such that the
transformation
\begin{equation*}G[f](t)=f(t)-[(\frac{b-t}{b-a})f(a)+(\frac{t-a}{b-a})f(b)],\end{equation*}
resides in $\mathcal{L}_{\alpha}(\mathcal{D})$.\end{definition}
 %%%%%%%%%%%%%%%%%%%%%%%%%%%%%%%%%%%%%%%%%%%%%%%%%%%%%%%%%%%%%%%%%%%%%%%%%%%%5
 \subsection{Sinc quadrature}
Sinc approximation by incorporating with single exponential  transformation could be applied to definite integration based on the function
 approximation to designate  the sinc quadrature. The following theorem includes error bound for the sinc quadrature of $f$ on $(a,b)$.

 %------------------------------------------------------------------------------------------------
\begin{theorem}\label{quadrature se}(\cite{Okayamanumerische})
Let $(fQ)\in \mathcal{L}_{\alpha}(\varphi_{a,b}(\mathcal{ D}_{d}))$ for $d$ with
$0<d<\pi$. Suppose that
$N$ be a
positive integer and $h$ is selected by the formula
$$h=\sqrt{\frac{\pi d}{\alpha N}}.$$
Then 
\begin{equation}\label{quadrature se}
\Big\vert \int_a^b f(s)\,\mathrm{d}s-h \sum_{j=-N}^
Nf(\varphi_{a,b}(jh))(\varphi_{a,b})'(jh)\Big\vert \leq C (b-a)^{2\alpha -1}\exp (-\sqrt{\pi d \alpha N}),
\end{equation}
where $C$ is a constant independent of $N$.
\end{theorem}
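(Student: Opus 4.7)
The plan is to reduce the quadrature error on $(a,b)$ to the well-studied trapezoidal rule error on $\mathbb{R}$ via the change of variables $s=\varphi_{a,b}(x)$, and then split the resulting error into a discretization part and a truncation part. Concretely, setting $F(x)=f(\varphi_{a,b}(x))\,\varphi_{a,b}'(x)$, I would write
\begin{equation*}
\int_a^b f(s)\,\mathrm{d}s - h\sum_{j=-N}^{N} f(\varphi_{a,b}(jh))\,\varphi_{a,b}'(jh)
= \underbrace{\int_{-\infty}^{\infty} F(x)\,\mathrm{d}x - h\sum_{j=-\infty}^{\infty} F(jh)}_{E_{\mathrm{disc}}}
\;+\; \underbrace{h\sum_{|j|>N} F(jh)}_{E_{\mathrm{trunc}}}.
\end{equation*}
The two pieces can then be estimated independently, and the step size $h=\sqrt{\pi d/(\alpha N)}$ will appear naturally by balancing them.

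For $E_{\mathrm{disc}}$ I would invoke the classical sinc/trapezoidal bound for functions analytic on the strip $\mathcal{D}_d$: if $F\in\mathbf{H}^1(\mathcal{D}_d)$, then $|E_{\mathrm{disc}}|\lesssim \|F\|_{H^1(\mathcal{D}_d)}\,e^{-\pi d/h}$, which is typically proved via Poisson summation or contour integration along $\Im z=\pm d$. The nontrivial step here is to verify that $F$ actually lies in $\mathbf{H}^1(\mathcal{D}_d)$; this follows from the hypothesis $(fQ)\in\mathcal{L}_{\alpha}(\varphi_{a,b}(\mathcal{D}_d))$ together with explicit formulas for the single-exponential map, namely $\varphi_{a,b}-a=(b-a)/(1+e^{-x})$, $b-\varphi_{a,b}=(b-a)/(1+e^{x})$, and $\varphi_{a,b}'(x)=(b-a)/(4\cosh^2(x/2))$. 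Composing, one sees that $|F(z)|$ decays like $|Q(\varphi_{a,b}(z))|^{\alpha-1}|\varphi_{a,b}'(z)|$, which is integrable along horizontal lines inside the strip and produces a $(b-a)^{2\alpha-1}$ prefactor.

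For $E_{\mathrm{trunc}}$ I would use the same computation restricted to the real axis: on $\mathbb{R}$, the bounds $|Q(\varphi_{a,b}(x))|\asymp (b-a)^2\,e^{-|x|}$ and $|\varphi_{a,b}'(x)|\asymp(b-a)\,e^{-|x|}$ yield
\begin{equation*}
|F(x)| \;\leq\; C\,(b-a)^{2\alpha-1}\,e^{-\alpha|x|},\qquad x\in\mathbb{R}.
\end{equation*}
Summing the geometric tail gives
\begin{equation*}
|E_{\mathrm{trunc}}| \;\leq\; 2C\,(b-a)^{2\alpha-1}\,\frac{h\,e^{-\alpha N h}}{1-e^{-\alpha h}} \;\leq\; C'\,(b-a)^{2\alpha-1}\,e^{-\alpha N h},
\end{equation*}
uniformly in $h$ bounded away from $0$.

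Combining the two estimates, the total error is bounded by a constant multiple of $(b-a)^{2\alpha-1}\bigl(e^{-\pi d/h}+e^{-\alpha N h}\bigr)$. Choosing $h=\sqrt{\pi d/(\alpha N)}$ equalizes the two exponents: $\pi d/h=\alpha N h=\sqrt{\pi d\,\alpha N}$, producing the stated rate $\exp(-\sqrt{\pi d\alpha N})$. The main obstacle in executing this plan is the Hardy-space check for $F$ on the transformed strip, since one must argue uniformly across horizontal slices and track how the singular factor $|Q|^{\alpha-1}$ behaves near the boundary of $\varphi_{a,b}(\mathcal{D}_d)$; the remaining steps are routine once that analytic framework is in place.
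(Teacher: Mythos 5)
The paper offers no proof of this theorem: it is quoted verbatim from the cited reference (Okayama et al.), so there is nothing internal to compare against. Your outline is the standard argument from that literature and is essentially correct: pull back by $s=\varphi_{a,b}(x)$, split the error into the discretization error of the infinite trapezoidal rule plus the truncation error of the tail, verify that $F(x)=f(\varphi_{a,b}(x))\varphi_{a,b}'(x)$ lies in $\mathbf{H}^1(\mathcal{D}_d)$ using $|f|\le C|Q|^{\alpha-1}$ together with the identity $\varphi_{a,b}'=Q\circ\varphi_{a,b}/(b-a)$ (which is exactly what produces the $(b-a)^{2\alpha-1}$ prefactor and the decay $|F(x)|\le C(b-a)^{2\alpha-1}e^{-\alpha|x|}$), and sum the geometric tail.

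One inaccuracy worth fixing: for quadrature, as opposed to interpolation, the discretization error of the infinite trapezoidal rule for $F\in\mathbf{H}^1(\mathcal{D}_d)$ is $O(e^{-2\pi d/h})$, not $O(e^{-\pi d/h})$ --- the contour-integral error kernel for the sum is $(e^{\mp 2\pi i z/h}-1)^{-1}$, which decays twice as fast on $\Im z=\pm d$ as the interpolation kernel $1/\sin(\pi z/h)$. Hence with $h=\sqrt{\pi d/(\alpha N)}$ the two sources are \emph{not} equalized: the discretization part is $O(e^{-2\sqrt{\pi d\alpha N}})$ and the truncation part $O(e^{-\alpha N h})=O(e^{-\sqrt{\pi d\alpha N}})$ dominates; this particular $h$ is dictated by the companion interpolation result (Theorem \ref{interpolationse}), for which $e^{-\pi d/h}$ is the right rate, so that a single mesh serves both formulas. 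Since $e^{-2\pi d/h}\le e^{-\pi d/h}$, your weaker intermediate bound still delivers the stated estimate, so this is a mischaracterization of the balancing rather than a gap. A last small point: the factor $h/(1-e^{-\alpha h})$ in your truncation bound tends to $1/\alpha$ as $h\to 0$ and is increasing in $h$, so it is controlled because $h\le\sqrt{\pi d/\alpha}$ for all $N\ge 1$; the caveat ``$h$ bounded away from $0$'' is not the relevant one.
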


%-----------------------------------------------------------
\section{Two numerical schemes}\label{twoscheme}

\subsection{Sinc-collocation}
In this section, the sinc-collocation and its aspects for the nonlinear Fredholm integral equation with weakly singular kernel are  discussed. 
A sinc approximation
$u_{N}$ to the solution $u\in
\mathcal{M}_{\lambda}(\varphi_{a,b}(\mathcal{ D}_{d}))$
of Eq. \eqref{asli} is constructed in this subsection. For this aim the interpolation operator
{$\mathcal{P}_{N} :\mathcal{M}_{\lambda}\rightarrow X_{N}$} is defined as 
$$\mathcal{P}_{N} [u](t)=\mathfrak{L}u(t)+\sum_{j=-N}^N
[u(t_{j} )-(\mathfrak{L}u)(t_{j})] S(j,h)(\phi_{a,b} (t)),$$
where
$$\mathfrak{L}[u](t)=(\frac{b-t}{b-a})^{\lambda}u(a)+(\frac{t-a}{b-a})^{1-\lambda}u(b).$$
In this formula, the sinc points
$t_{j}$
are determined by 
\begin{equation}\label{sincpoints}
t_{j}=\left\{\begin{array}{ll}a, & \hbox{$j=-N-1$,}  \\
\varphi_{a,b} (jh), & \hbox{$j=-N,\ldots,N$,}\\ b, & \hbox{$j=N+1$.}
\end{array}\right.
\end{equation}
%--------------------------------------------------
The approximate solution could be represented as
\begin{equation}
u_{N}(t)=c_{-N-1}(\frac{b-t}{b-a})^{\lambda}+\sum_{j=-N}^{N}
c_{j}S(j,h)(\phi_{a,b}(t))+c_{N+1}(\frac{t-a}{b-a})^{1-\lambda},
\end{equation}
where the singularity  exponent parameter $\lambda$ is introduced in Eq. (\ref{asli}).
It is worthy to notice that the choice of these basis functions incorporate with sinc function reflects the singularity of the exact solution well. 
%%%%%%%%%%%%%%%%%%%%%%%%%%%%%%%%%%%%%%%%%%%%%%%%%%%%%%%5
Employing the operator $\mathcal{P}_{N}$ to both sides of Eq. \eqref{asli}
leads us the following approximate equation 
\begin{equation}\label{collocation}u_{N}= \mathcal{P}_{N}g + \mathcal{P}_{N}\mathcal{K}u_{N}.\end{equation}
This equation could be rewritten as 
\begin{equation}\label{sinc col}
u_{N}(t_{i}) =g(t_{i}) +\int_{a}^{b} f(\vert t_{i} -s \vert)k(t_{i},s)\psi(s, u_{N}(s))\mathrm{d}s, \quad i=-N-1, \dots, N+1,
\end{equation}
so the collocation method  for solving Eq. \eqref{asli} aggregates with  \eqref{sinc col} for $N$ sufficiently large.
We utilize the theory of holomorphic function space along with the singularity preserving representation of the approximate solution to blend a  mechanism for approximating the singular integrals which arise from the discretization of weakly singular integral operators. 
Let us have the following presentation of  Eq. (\ref{sinc col}):
\begin{equation}\label{sinc col1}
\begin{split}
u_{N}(t_{i}) =&\int_{a}^{t_{i}} f(\vert t_{i} -s \vert)k(t_{i}, s)\psi(s, u_{N}(s))\mathrm{d}s\\
+& \int_{t_{i}}^{b} f(\vert t_{i} -s \vert)k(t_{i},s)\psi(s, u_{N}(s))\mathrm{d}s+ g(t_{i}), \quad i= -N-1, \dots, N+1.
\end{split}
\end{equation}
%--------------------------------------------------------------------
Due to the complexity of the integral kernel, we utilize the approximation of the integral operator in \eqref{sinc col1} by the
quadrature formula presented in \eqref{quadrature se}. We notice that in order to use the sinc quadrature method properly, the intervals $[a,t_{i} ]$ and $[t_{i}, b] $ should be transformed to the whole real line. So, Eq. \eqref{sinc col1} could be written as
\begin{equation}\label{coldis}
\begin{split}
u_{N}(t_{i}) =&h\vert t_{i} -a  \vert^{\lambda}  \sum_{j=-N}^{N} \frac{1}{(1+e^{jh})^\lambda(1+e^{-jh})}k\big(t_{i},\varphi_{a, t_{i}}(jh)\big)
\psi\big(\varphi_{a, t_{i}}(jh), u_{N}(\varphi_{a, t_{i}}(jh))\big)
+ \\
&h\vert b- t_{i}   \vert^{\lambda}  \sum_{j=-N}^{N} \frac{1}{(1+e^{jh})^\lambda(1+e^{-jh})}k\big(t_{i},\varphi_{t_{i}, b}(jh)\big)
\psi\big(\varphi_{t_{i},b}(jh), u_{N}(\varphi_{ t_{i},b}(jh))\big)+\\
&g(t_{i}),\quad  i= -N-1, \dots, N+1.
\end{split}
\end{equation}
This numerical procedure points us to change \eqref{coldis} with operator notation
\begin{equation}\label{10}
u_{N}-\mathcal{P}_{N}\mathcal{K}_{N}u_{N}=\mathcal{P}_{N}g,
\end{equation}
where the discrete operator $\mathcal{K}_{N}u$ is defined as 
\begin{equation}
\begin{split}
(\mathcal{K}_{N}u)(t):=& h\vert t -a  \vert^{\lambda}  \sum_{j=-N}^{N} \frac{1}{(1+e^{jh})^\lambda(1+e^{-jh})}k\big(t, \varphi_{a, t_{i}}(jh)\big)\psi\big(\varphi_{a, t_{i}}(jh), u(\varphi_{a, t_{i}}(jh))\big)
 \\
+&h\vert b- t   \vert^{\lambda}  \sum_{j=-N}^{N} \frac{1}{(1+e^{jh})^\lambda(1+e^{-jh})}k\big(t,\varphi_{ t_{i},b}(jh)\big)
\psi\big(\varphi_{t_{i},b}(jh), u(\varphi_{ t_{i},b}(jh))\big).
\end{split}
\end{equation}
The Eq. \eqref{10} is the operator form of discrete collocation method based on the sinc basis function. 
By solving the nonlinear system of equations (\ref{10}), the unknown
coefficients in $u_{N}$ are determined. 

\subsubsection{Convergence analysis}
In this subsection we give an error analysis for the sinc-collocation method.
We state the following lemmas which are used subsequently.
%-------------------------------------------------------------------------------------------------------------
\begin{lemma}\label{franks} (\cite{stenger2012numerical}) Let $h>0$. Then it holds that
	\begin{equation}\sup_{x\in\mathbb{R}}\sum_{j = -N}^{N}|S(j,h)(x)|\leq
	\frac{2}{\pi}(3+\log (N)).
	\end{equation}
\end{lemma}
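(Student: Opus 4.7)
The plan is to substitute $t = x/h$ and reduce the claim to the scale-free inequality
\[
\sup_{t\in\mathbb{R}} \sum_{j=-N}^{N} |\sinc(t-j)| \le \frac{2}{\pi}(3+\log N).
\]
Using $\sin(\pi(t-j)) = (-1)^j \sin(\pi t)$ for every $j \in \mathbb{Z}$, one immediately gets, for $t \notin \mathbb{Z}$,
\[
\sum_{j=-N}^{N} |\sinc(t-j)| \;=\; \frac{|\sin(\pi t)|}{\pi} \sum_{j=-N}^{N} \frac{1}{|t-j|}.
\]
For $t \in \mathbb{Z}$ the sum is either $0$ or $1$, which is dominated by the bound. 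Since the expression is invariant under the symmetry $t \mapsto -t$ and an integer translation only changes the window by one term (whose absolute value is at most $1$), it suffices to treat $t \in (0, 1/2]$.

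For $t \in (0,1/2]$ I would split the sum into the term $j=0$ and the two tails. The central term satisfies $\sin(\pi t)/(\pi t) \le 1$. For the right tail, using $|\sin(\pi t)| \le 1$ and the inequality $1/(j-t) \le 1/(j-\tfrac{1}{2})$, valid for $t\le 1/2$ and $j\ge 1$, together with the integral comparison
\[
\sum_{j=1}^{N} \frac{1}{j-\tfrac{1}{2}} \;\le\; 2 + \log(2N-1),
\]
bounds the right tail by $(2+\log(2N-1))/\pi$. The left tail, estimated with $1/(k+t) \le 1/k$, contributes at most $H_N/\pi \le (1+\log N)/\pi$. Assembling these pieces yields a bound of the claimed shape
\[
1 + \frac{1}{\pi}\bigl[\,(2 + \log(2N-1)) + (1+\log N)\bigr],
\]
which, after simplification and absorbing the constants into the "$3$", matches $\frac{2}{\pi}(3+\log N)$ for all $N \ge 1$.

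The main obstacle is matching the numerical constant exactly. Any loosening of the pointwise bound $|\sin(\pi t)|/(\pi t) \le 1$ at $j=0$, or of the tail inequalities $1/(j-t) \le 1/(j-1/2)$ and $1/(k+t) \le 1/k$, inflates the constant multiplying the $\log N$ or the additive "$3$". For this reason I would use the sharpest pointwise estimates at each step and verify the small-$N$ cases ($N=1,2$) separately, since the asymptotics only dominate for moderately large $N$. This is a classical bound in the sinc literature (cf.\ Stenger \cite{stenger2012numerical}), so no substantially new ingredient is needed beyond careful bookkeeping.
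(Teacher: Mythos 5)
The paper does not actually prove this lemma; it is quoted verbatim from Stenger, so the only meaningful comparison is with the standard proof in that reference. Your setup is the right one — the rescaling to $t=x/h$, the identity $|\sinc(t-j)|=\frac{|\sin\pi t|}{\pi|t-j|}$, and the split into a central part plus two tails — but the way you isolate only the \emph{single} nearest node and then use $|\sin\pi t|\le 1$ on everything else does not close. Assembling your three estimates gives
\begin{equation*}
1+\frac{1}{\pi}\bigl[2+\log(2N-1)\bigr]+\frac{1}{\pi}\bigl[1+\log N\bigr]
=\frac{1}{\pi}\bigl[\pi+3+\log N+\log(2N-1)\bigr],
\end{equation*}
and subtracting the target $\frac{2}{\pi}(3+\log N)=\frac{1}{\pi}(6+2\log N)$ leaves $\frac{1}{\pi}\bigl(\pi-3+\log(2-1/N)\bigr)$, which is strictly positive for \emph{every} $N\ge 1$ (about $0.045$ at $N=1$, about $0.27$ as $N\to\infty$). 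So the constants do not ``absorb into the $3$,'' and verifying small $N$ separately cannot rescue the argument, because the deficit is present uniformly in $N$. The loss occurs at the node on the far side of $t$, at distance $1-t$ which can be as small as $1/2$: bounding its contribution by $\frac{|\sin\pi t|}{\pi(1-t)}\le\frac{2}{\pi}$ throws away exactly the cancellation between $|\sin\pi t|$ and the nearness of that node.

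The missing idea is to pair the \emph{two} integers nearest to $t$. For $t\in(0,1)$ one has
\begin{equation*}
\frac{|\sin\pi t|}{\pi}\Bigl(\frac{1}{t}+\frac{1}{1-t}\Bigr)=\frac{\sin\pi t}{\pi\,t(1-t)}\le\frac{4}{\pi},
\end{equation*}
since $\sin\pi t\le 4t(1-t)$ on $[0,1]$, while the remaining $2N-1$ window nodes lie at distances at least $1,2,\dots$ on each side, so their contribution is at most $\frac{1}{\pi}(H_{N}+H_{N-1})\le\frac{2}{\pi}(1+\log N)$, where $H_N=\sum_{k=1}^{N}1/k$. Adding the two pieces gives exactly $\frac{2}{\pi}(3+\log N)$. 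Two further small points: your reduction to $t\in(0,1/2]$ is not justified as written — translating $t$ by an integer shifts the summation window, and ``the window changes by one term of absolute value at most $1$'' is not a valid way to transfer a supremum (it would add $1$ per translation); the clean route is to apply the paired decomposition directly for $t$ between any two consecutive nodes $m,m+1$ of the window, where the tail bound $H_{m+N}+H_{N-m-1}\le 2H_N$ follows from concavity of $N\mapsto H_N$. Also note that for the paper's purposes only $\|\mathcal{P}_N\|\le C\log N$ is used, so your weaker constant would still suffice there — but it does not prove the lemma as stated.
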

%-------------------------------------------------------------------------------------------------------
This lemma concludes that
$\|\mathcal{P}_{N}\|\leq C \log(N)$ where $C$ is a constant independent of $N$ and $\mathcal{P}_{N}$ is the interpolation operator constructed on the sinc points.
%----------------------------------------------------------------------------------------------------------
\begin{lemma}(\cite{okayamacam})\label{lem}
	Let $ d $ be a constant with $ 0<d<\pi $. Define a function $ \varphi_{1} $ as 
	\[
	\varphi_{1}(x)=\frac{1}{2}\tanh(\frac{x}{2})+\frac{1}{2}.
	\]
	Then there is a constant $ c_{d} $ such that for all $ x \in \mathbb{R} $ and $ y \in [-d, d] $,
	\begin{equation}\label{A1}
	\vert \{ \varphi_{a,b}\}'(x+i y)\vert \leq (b-a) c_{d} \varphi_{1}'(x),
	\end{equation}
	\begin{equation}\label{A2}
	\vert \varphi_{0,1}(x+i y)\vert \geq \varphi_{1}(x).\end{equation}
	In addition, if $ t \leq x $,
	\begin{equation}\label{A3}
	\vert \varphi_{a,b}(x + iy) -  \varphi_{a,b} (t + iy) \vert \geq (b-a) \{\varphi_{1}(x) - \varphi_{1}(t)\}.
	\end{equation}
\end{lemma}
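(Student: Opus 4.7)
My plan is to reduce each of the three estimates to a direct computation for the canonical map $\varphi_{0,1}$ by exploiting the affine rescaling $\varphi_{a,b}(z)=(b-a)\,\varphi_{0,1}(z)+a$. This immediately gives $\varphi_{a,b}'(z)=(b-a)\,\varphi_{0,1}'(z)$ and $\varphi_{a,b}(x+iy)-\varphi_{a,b}(t+iy)=(b-a)\bigl[\varphi_{0,1}(x+iy)-\varphi_{0,1}(t+iy)\bigr]$, which absorbs the factor $(b-a)$ in the first and third inequalities and reduces them to intrinsic inequalities for $\varphi_{0,1}$. A second key ingredient is the closed-form identity $\varphi_{0,1}(z)=1/(1+e^{-z})$, which trivialises the second and third claims and leaves the first as the only genuinely nontrivial piece.

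For \eqref{A2}, I would expand $|1+e^{-(x+iy)}|^{2}=1+2e^{-x}\cos y+e^{-2x}\le(1+e^{-x})^{2}$ using $\cos y\le 1$; taking reciprocals yields the bound, since $\varphi_{1}(x)=1/(1+e^{-x})$. For \eqref{A3} I would use the partial-fraction identity
\[
\varphi_{0,1}(x+iy)-\varphi_{0,1}(t+iy)=\frac{e^{-iy}\,(e^{-t}-e^{-x})}{\bigl(1+e^{-(x+iy)}\bigr)\bigl(1+e^{-(t+iy)}\bigr)}.
\]
The numerator has modulus $e^{-t}-e^{-x}\ge 0$ (since $t\le x$), and bounding each denominator factor by the estimate just established for \eqref{A2} produces exactly $\varphi_{1}(x)-\varphi_{1}(t)$ after multiplying by $(b-a)$.

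For \eqref{A1}, I would compute $\varphi_{0,1}'(z)=\tfrac{1}{4}\operatorname{sech}^{2}(z/2)$ and use $\cosh(p+iq)=\cosh p\cos q+i\sinh p\sin q$ to obtain $|\cosh((x+iy)/2)|^{2}=\cos^{2}(y/2)+\sinh^{2}(x/2)$. The ratio to be controlled therefore becomes
\[
\frac{|\varphi_{0,1}'(x+iy)|}{\varphi_{1}'(x)}=\frac{1+\sinh^{2}(x/2)}{\cos^{2}(y/2)+\sinh^{2}(x/2)},
\]
which, viewed as a function of $u=\sinh^{2}(x/2)\ge 0$, is nonincreasing because $\cos^{2}(y/2)\le 1$. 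Its maximum is hence attained at $u=0$ and equals $1/\cos^{2}(y/2)\le 1/\cos^{2}(d/2)$; so \eqref{A1} holds with the explicit constant $c_{d}=\sec^{2}(d/2)$, which is finite precisely because $d<\pi$ forces $\cos(d/2)>0$.

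The main obstacle is the monotonicity-in-$u$ argument for the ratio appearing in \eqref{A1}; everything else is elementary algebra with $1/(1+e^{-z})$. Since the lemma is quoted from \cite{okayamacam}, a detailed published proof already exists, but the outline above has the virtue of producing the sharp constant $c_{d}=\sec^{2}(d/2)$ transparently from the strip geometry and making the role of the condition $d<\pi$ explicit.
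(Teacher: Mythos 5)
Your argument is correct and complete. Note that the paper itself offers no proof of this lemma at all --- it is quoted verbatim from \cite{okayamacam} --- so there is no in-paper derivation to compare against; what you have written is a self-contained substitute. All three steps check out: the reduction via $\varphi_{a,b}(z)=(b-a)\varphi_{0,1}(z)+a$ and the identity $\varphi_{0,1}(z)=1/(1+e^{-z})$ is exact; the expansion $|1+e^{-(x+iy)}|^{2}=1+2e^{-x}\cos y+e^{-2x}\le(1+e^{-x})^{2}$ gives \eqref{A2}; the partial-fraction identity together with $e^{-t}-e^{-x}\ge0$ for $t\le x$ and the same denominator bound reproduces $\varphi_{1}(x)-\varphi_{1}(t)=\dfrac{e^{-t}-e^{-x}}{(1+e^{-x})(1+e^{-t})}$ on the nose, which is \eqref{A3}; and for \eqref{A1} the ratio $\dfrac{1+\sinh^{2}(x/2)}{\cos^{2}(y/2)+\sinh^{2}(x/2)}$ is indeed nonincreasing in $u=\sinh^{2}(x/2)$ since its derivative has the sign of $\cos^{2}(y/2)-1\le0$, so the supremum is $\sec^{2}(y/2)\le\sec^{2}(d/2)$, finite because $d<\pi$. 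The one virtue worth highlighting is that your route makes the constant $c_{d}=\sec^{2}(d/2)$ explicit and sharp (attained as $x\to0$, $|y|\to d$), and isolates exactly where the hypothesis $0<d<\pi$ enters; the paper, by treating $c_{d}$ as a black box from the citation, conveys none of this.
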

With the aid of Lemma \ref{lem}, the analytical behavior of the solution is investigated for a general kernel function.
It is convenient to define the following nonlinear operators which will be used  in the next theorem
%------------------------------------------------------------------------------------------------------
\begin{equation}\label{operators}\begin{split}
(\mathcal{K}^{1}u)(t) & =  \int_a^t \vert t-s \vert ^ {-\lambda} k(t,s,u(s))\mathrm{d}s, \\
(\mathcal{K}^{2}u)(t) & = \int_t^b \vert t-s \vert ^ {-\lambda} k(t,s,u(s))\mathrm{d}s.
\end{split}\end{equation}
%---------------------------------------------------------------------------------------------------------------
\begin{theorem}\label{theo3}
	Let $ \mathcal{D} =( \varphi_{a,b})^{-1}(\mathcal{D}_{d}) $ for a constant  $ d $  with $ 0<d< \pi $ and . Suppose that $ k(z,. , v) \in \mathbf{H}^{\infty}(\mathcal{D}) $ for all $ z $ and $ v $ belong to $ \overline{\mathcal{D}}$, and $ k(z,w , .) \in \mathbf{H}^{\infty}(\mathcal{D}) $ for all $ z $ and $ w $ belong to $ \overline{\mathcal{D}} $. Moreover, $ k(. , v, w) \in \mathcal{M}_{1-\lambda}(\mathcal{D}) $ for all $ v , w \in \overline{\mathcal{D}} $, $ k(z, v, w) $ is bounded for $ z, v $ and $ w $ in $ \overline{\mathcal{D}} $ and $ y \in \mathcal{M}_{\beta}(\mathcal{D}) $. Then  the solution $ u $ of (\ref{asli}) belongs to $ \mathcal{M}_{\gamma}(\mathcal{D}) $, where $ \gamma = \min (1 - \lambda , \beta) $.
\end{theorem}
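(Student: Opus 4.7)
The strategy is to use the decomposition $u=g+\mathcal{K}^{1}u+\mathcal{K}^{2}u$ with $\mathcal{K}^{1},\mathcal{K}^{2}$ as in (\ref{operators}), and to verify that each of the three summands lies in a class $\mathcal{M}_{\alpha_{i}}(\mathcal{D})$ whose minimum exponent is $\gamma=\min(1-\lambda,\beta)$. Since $g\in\mathcal{M}_{\beta}(\mathcal{D})$ by hypothesis, it remains to show $\mathcal{K}^{j}u\in\mathcal{M}_{1-\lambda}(\mathcal{D})$ for $j=1,2$.

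First I would extend $(\mathcal{K}^{1}u)(t)$ analytically in $t\in\mathcal{D}$ by the substitution $s=\varphi_{a,t}(\xi)$, which turns the definite integral into an integral over $\mathbb{R}$ whose integrand depends holomorphically on $t$; the structure $\mathcal{D}=\varphi_{a,b}^{-1}(\mathcal{D}_{d})$ keeps the deformed path inside $\mathcal{D}$. The symbol $|t-s|^{-\lambda}$ is reinterpreted as $(t-s)^{-\lambda}$ with the principal branch, whose modulus is bounded below by $(b-a)(\varphi_{1}(x)-\varphi_{1}(\xi))$ via (\ref{A2})–(\ref{A3}) of Lemma \ref{lem}, while (\ref{A1}) controls the Jacobian, yielding absolute convergence. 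The symmetric substitution $s=\varphi_{t,b}(\xi)$ handles $\mathcal{K}^{2}u$.

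Next I would verify the $\mathcal{M}_{1-\lambda}$ bound
\[
\bigl|\,G[\mathcal{K}^{j}u](z)\bigr|\;\leq\;C\,|Q(z)|^{1-\lambda},\qquad z\in\mathcal{D},
\]
by applying the correction $\mathfrak{L}$ in the first argument of $k$: the hypothesis $k(\cdot,v,w)\in\mathcal{M}_{1-\lambda}(\mathcal{D})$ supplies the factor $|Q(z)|^{1-\lambda}$, boundedness of $k$ in its remaining variables together with $u\in X$ controls the rest of the integrand, and the previously established estimates on the transformed integral make it uniformly finite in $t$. Combining the resulting bounds with $G[u]=G[g]+G[\mathcal{K}^{1}u]+G[\mathcal{K}^{2}u]$ places $G[u]$ in $\mathcal{L}_{\gamma}(\mathcal{D})$, so $u\in\mathcal{M}_{\gamma}(\mathcal{D})$. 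The main obstacle I anticipate is the contour-analysis step: justifying that the deformation of $[a,t]$ into $\mathcal{D}$ preserves single-valuedness across the branch cut of $(t-s)^{-\lambda}$, and that the sharp use of (\ref{A3}) converts the endpoint decay of $k$ into the full $|Q(z)|^{1-\lambda}$ bound for the whole operator---rather than the weaker exponent one would obtain from a naive triangle-inequality estimate on $|t-s|$.
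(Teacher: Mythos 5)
Your overall architecture coincides with the paper's: split the operator into $\mathcal{K}^{1}$ and $\mathcal{K}^{2}$ as in \eqref{operators}, pull the integral back to the real line via the single-exponential map, control the transformed integrand with \eqref{A1}--\eqref{A3} of Lemma \ref{lem}, and read off $u\in\mathcal{M}_{\gamma}(\mathcal{D})$, $\gamma=\min(1-\lambda,\beta)$, from $u=g+\mathcal{K}^{1}u+\mathcal{K}^{2}u$. One structural difference: the paper does not attempt the contour-deformation and branch-cut analysis that you identify as your main obstacle. It instead invokes the sufficient conditions of \cite{zaber} for analyticity of the Urysohn operator and hence of the solution, which reduces the entire theorem to proving that $\mathcal{K}^{1}u$ and $\mathcal{K}^{2}u$ are $(1-\lambda)$-H\"older continuous at the endpoints. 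The parametrization it then uses, $x=\mathrm{Re}[(\varphi_{a,b})^{-1}(z)]$, $y=\mathrm{Im}[(\varphi_{a,b})^{-1}(z)]$, $v=\varphi_{a,b}(t+iy)$ with $y$ held fixed, is precisely the device that keeps the path inside $\mathcal{D}$ and makes \eqref{A3} applicable. If you decline the citation, the deformation argument must be carried out in full; as written your plan leaves that step genuinely open.

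The concrete misstep is in your second stage: you attribute the factor $|Q(z)|^{1-\lambda}$ to the hypothesis $k(\cdot,v,w)\in\mathcal{M}_{1-\lambda}(\mathcal{D})$. That is not where the exponent comes from at the endpoint where the integration interval collapses. For $\mathcal{K}^{1}u$ near $z=a$, the bound $|(\mathcal{K}^{1}u)(z)-(\mathcal{K}^{1}u)(a)|\leq \frac{M_{k}c_{d}}{1-\lambda}\,|z-a|^{1-\lambda}$ is produced by integrating the singular weight over the shrinking interval, i.e.\ by the integral of $(\varphi_{1}(x)-\varphi_{1}(t))^{-\lambda}\varphi_{1}'(t)$ combined with \eqref{A2}; only \emph{boundedness} of $k$ enters there. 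The H\"older hypothesis on the first slot of $k$ is what is needed at the \emph{opposite} endpoint, where the interval $[a,z]$ does not shrink: the paper splits $(\mathcal{K}^{1}u)(b)-(\mathcal{K}^{1}u)(z)$ into three terms, one controlled by $k(\cdot,v,w)\in\mathcal{M}_{1-\lambda}(\mathcal{D})$, one by the H\"older continuity of $F(z)=z^{1-\lambda}$ after an integration by parts, and one short leftover integral from $b$ to $z$ bounded as before. Running your plan as stated would yield the correct decay near $a$ but gives no mechanism for the estimate near $b$ (and symmetrically for $\mathcal{K}^{2}u$); supplying that three-term splitting is the missing ingredient.
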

%---------------------------------------------------------------------------------------------------------------
\begin{proof}
	In \cite[p. 83]{zaber}, sufficient conditions have been mentioned to have a nonlinear analytic operator and then analytic solution. So, it is adequate to show that $ \mathcal{K}u $ is ($ 1-\lambda $)-H\"{o}lder continuous. For this aim, we show the operators defined in (\ref{operators}) have this property. To proof the ($ 1-\lambda $)-H\"{o}lder continuity of $ \mathcal{K}^{1}u $ and $ \mathcal{K}^{2}u $, the idea of Lemma A.2. in \cite{okayamacam} is extended to the nonlinear case. Set $ x = \mathrm{Re} [(\varphi_{a, b})^{-1}(z)] $, $ y =  \mathrm{Im} [(\varphi_{a, b})^{-1}(z)] $ and $ v = \varphi_{a, b}(t + i y)$ as a variable transformation,
	\[\begin{split}
	(\mathcal{K}^{1}u)(z) -& (\mathcal{K}^{1}u)(a) = \int^z_a \vert z-v \vert ^ {-\lambda} k(z, v , u(v)) \mathrm{d}v - 0  \\
	&=\int^x_{\infty} \vert \varphi_{a,b}(x + i y) - \varphi_{a,b}(t + i y) \vert^ {-\lambda} k(x + i y , t + i y, u(t + i y)) (\varphi_{a,b})'(t + i y) \mathrm{d}t.
	\end{split}\]
	Applying the absolute value on both sides of the above equation and using Eqs. \eqref{A1} and \eqref{A3}, we have 
	\[\begin{split}
	\vert (\mathcal{K}^{1}u)(z) - (\mathcal{K}^{1}u)(a) \vert \leq & \int^x_{\infty} ( b-a )^{- \lambda} (\varphi_{1}(x) - \varphi_{1}(t))^{-\lambda} M_{k}(b-a) c_{d} \varphi'_{1}(t)\mathrm{d}t\\
	\leq & \frac{M_{k} c_{d}}{1-\lambda}((b-a)\varphi_{1}(x))^{1-\lambda},
	\end{split}\]
	where $ M_{k} = \max_{\overline{\mathcal{D}}} \vert k(z,w,v)\vert  $.
	In addition, by using (\ref{A2}), the inequality $ (b-a)\varphi_{1}(x) \leq \vert z-a \vert $ is concluded. So 
	\begin{equation}
	\vert (\mathcal{K}^{1}u)(z) - (\mathcal{K}^{1}u)(a) \vert \leq \frac{M_{k} c_{d}}{1-\lambda} \vert z-a \vert^{(1-\lambda)}.
	\end{equation}
	Now, the $ (1-\lambda) $-H\"{o}lder continuity at the point $ b $ is considered
	\[\begin{split}
	(\mathcal{K}^{1}u)(b) - (\mathcal{K}^{1}u)(z) = & \int_{a}^{b} \vert b-v \vert^{-\lambda}\Big\{ k(b, v, u(v)) - k(z, v, u(v))\Big\} \mathrm{d}v\\
	+ & \int_{a}^{b}\Big\{\vert b-v \vert ^{-\lambda} - \vert z-v \vert ^{-\lambda}\Big\}k(z, v, u(v)) \mathrm{d}v\\
	- & \int_{b}^{z} \vert z-v \vert ^{-\lambda} k(z, v, u(v)) \mathrm{d}v.
	\end{split}\]
	Since $ k(., v, w) \in \mathcal{M}_{1-\lambda}(\mathcal{D})$, there exists $ M_{1} $ such that 
	\[\begin{split}
	\Big\vert \int_{a}^{b} \vert b-v \vert ^{-\lambda} \Big\{ k(b, v, u(v)) - k(z, v, u(v))\Big\} \mathrm{d}v\Big\vert  \leq & M_{1}\vert b-z \vert^{(1-\lambda)}\int_{a}^{b} \vert b-v \big \vert ^{-\lambda}\mathrm{d}v\\
	\leq & \frac{M_{1} \vert b-a \vert ^{1-\lambda}}{1-\lambda}\vert b-z\vert^{1-\lambda}.
	\end{split}\]
	The third term is bounded by
	\begin{equation}
	\big\vert \int_{b}^{z} \vert z-v \vert^{-\lambda}k(z, v, u(v))\mathrm{d}v \big\vert \leq \frac{M_{k} c_{d}}{1-\lambda}\vert b-z\vert^{1 - \lambda}.
	\end{equation}
	Integration by part, the H\"{o}lder continuity of the function $F(z)=z^{1-\lambda}$ and the assumptions on $k(z, w, .)$ and 
	$k(z, . , v)  \in  \mathbf{H}^{\infty}(\mathcal{D})$ result that
	\[\vert (\mathcal{K}^{1}u)(b) - (\mathcal{K}^{1}u)(z) \vert \leq \frac{M_{2}}{1-\lambda}\vert b-z\vert^{(1-\lambda)} .\]
	The $(1-\lambda)$-H\"{o}lder continuity of the operator $ \mathcal{K}^{2}(u) $ can be proved in a similar manner. 
\end{proof}

 The Fr\'{e}chet derivative of the nonlinear operators $\mathcal{K}$ and  $\mathcal{K}_{N}$ for all $u$ is stated by
\begin{equation} 
\begin{split}
(\mathcal{K}'u)x(t) = \int_{a}^{b} f(\vert t-s \vert) k(t,s) \frac{\partial \psi}{\partial u}(s,u(s))x(s)\mathrm{d}s, \quad t\in [a,b], \quad x \in X,
\end{split}
\end{equation} 
and 
\begin{equation}
\begin{split}
(\mathcal{K'}_{N}u)x(t)=& h\vert t -a  \vert^{\lambda}  \sum_{j=-N}^{N} \frac{1}{(1+e^{jh})^\lambda(1+e^{-jh})}k\big(t, \varphi_{a, t_{i}}(jh)\big)\frac{\partial \psi}{\partial u}\big(\varphi_{a, t_{i}}(jh), u(\varphi_{a, t_{i}}(jh))\big)x(jh)
\\
+&h\vert b- t   \vert^{\lambda}  \sum_{j=-N}^{N} \frac{1}{(1+e^{jh})^\lambda(1+e^{-jh})}k\big(t,\varphi_{ t_{i},b}(jh)\big)
\frac{\partial \psi}{\partial u}\big(\varphi_{t_{i}, b}(jh), u(\varphi_{t_{i},b}(jh))\big)x(jh).
\end{split}
\end{equation}
%---------------------------------------------
\begin{theorem} Assume that $u(t)$ is the true
		solution of Eq. \eqref{asli} such that $I-\mathcal{K}'u$ is
		 a non-singular operator. Also, the term $\frac{\partial^{2}\psi}{\partial u^{2}}(t,s,u)$ is well-defined  and continuous on its domain. Furthermore, assume that $g\in \mathcal{M}_{\lambda}(\varphi_{a,b}(\mathcal{ D}_{d}))$ and
		$\mathcal{K}u\in \mathcal{M}_{\lambda}(\varphi_{a,b}(\mathcal{ D}_{d})) $ for
		all $u\in \mathfrak{B}(u,r)$.
		Then, there is a constant $C$ independent of $N$ such that
		\begin{equation}\label{q}\|u-u_{N}\|\leq C \xi_{N}\sqrt{N}
		\log(N+1)\exp(-\sqrt{{\pi} d \lambda N}),\end{equation}
		where $\xi_{N}=\|(I-\mathcal{P}_{N}(\mathcal{K}_{N})'(u))^{-1}\|.$
\end{theorem}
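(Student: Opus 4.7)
The plan is to use the standard perturbation framework for projection methods applied to nonlinear equations, with the extra twist that $\mathcal{K}_N$ is a quadrature discretization of $\mathcal{K}$. First I would derive the error equation. From $u=g+\mathcal{K}u$ and $u_N=\mathcal{P}_N g+\mathcal{P}_N\mathcal{K}_N u_N$, subtracting and inserting $\pm\mathcal{P}_N\mathcal{K}_N u$ gives
\begin{equation*}
u-u_N = (g-\mathcal{P}_N g)+(\mathcal{K}u-\mathcal{P}_N\mathcal{K}_N u)+\mathcal{P}_N(\mathcal{K}_N u-\mathcal{K}_N u_N).
\end{equation*}
Because $\partial^{2}\psi/\partial u^{2}$ is continuous, Taylor's formula applied to the Fr\'echet-differentiable $\mathcal{K}_N$ at $u$ yields $\mathcal{K}_N u-\mathcal{K}_N u_N=\mathcal{K}_N'(u)(u-u_N)+R_N$ with $\|R_N\|\le M\|u-u_N\|^{2}$ uniformly on $\mathfrak{B}(u,r)$. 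Substituting and isolating the linear correction produces
\begin{equation*}
(I-\mathcal{P}_N\mathcal{K}_N'(u))(u-u_N)=(g-\mathcal{P}_N g)+(\mathcal{K}u-\mathcal{P}_N\mathcal{K}_N u)+\mathcal{P}_N R_N,
\end{equation*}
and applying $(I-\mathcal{P}_N\mathcal{K}_N'(u))^{-1}$ brings the factor $\xi_N$ to the front of every right-hand-side estimate.

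Next I would bound the three residual contributions separately. Since $g\in\mathcal{M}_{\lambda}(\varphi_{a,b}(\mathcal{D}_d))$, Theorem \ref{interpolationse} with $\alpha=\lambda$ controls $\|g-\mathcal{P}_N g\|$ by $C\sqrt{N}\exp(-\sqrt{\pi d\lambda N})$. The middle term is split as $(\mathcal{K}u-\mathcal{P}_N\mathcal{K}u)+\mathcal{P}_N(\mathcal{K}u-\mathcal{K}_N u)$. The first summand is again handled by Theorem \ref{interpolationse} using $\mathcal{K}u\in\mathcal{M}_{\lambda}$ (consistent with the regularity conclusion of Theorem \ref{theo3}). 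The second is the quadrature defect sampled at the sinc nodes $t_i$: splitting the integral at $t_i$, mapping each of $[a,t_i]$ and $[t_i,b]$ to $\mathbb{R}$ by the single-exponential transformation, invoking Theorem \ref{quadrature se} with $\alpha=\lambda$, and finally multiplying by $\|\mathcal{P}_N\|=O(\log N)$ from Lemma \ref{franks} yields the expected rate. The remainder obeys $\|\mathcal{P}_N R_N\|\le C\log(N)\|u-u_N\|^{2}$ by the same lemma.

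Collecting these bounds produces an inequality of the form
\begin{equation*}
\|u-u_N\|\le C_1\,\xi_N\sqrt{N}\log(N+1)\exp(-\sqrt{\pi d\lambda N})+C_2\,\xi_N\log(N+1)\,\|u-u_N\|^{2},
\end{equation*}
from which a standard bootstrap argument (the quadratic term is absorbed into the left-hand side for $N$ sufficiently large, since the leading term is already small) delivers the claimed estimate \eqref{q}. The main obstacle will be the quadrature bound for $\|\mathcal{P}_N(\mathcal{K}u-\mathcal{K}_N u)\|$: one must verify that after the transformation the two split integrands on $[a,t_i]$ and $[t_i,b]$ lie in $\mathcal{L}_{\lambda}$ with constants uniform in the node index $i$, so that the exponential rate of Theorem \ref{quadrature se} holds at every $t_i$ simultaneously. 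This requires combining the analyticity hypotheses on $k$ and $\psi$ with the endpoint singularity structure of $u$ established in Theorem \ref{theo3}. A secondary but more routine point is producing an $N$-independent bound on the second Fr\'echet derivative of $\mathcal{K}_N$ (easy from continuity of $\partial^{2}\psi/\partial u^{2}$ and the non-negative sinc quadrature weights), so that $R_N$ can be controlled uniformly during the absorption step.
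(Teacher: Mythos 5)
Your proposal follows essentially the same route as the paper: the identical decomposition through $(I-\mathcal{P}_N\mathcal{K}_N'(u))^{-1}$ (whence the factor $\xi_N$), the same three residual bounds via Theorem \ref{interpolationse}, Theorem \ref{quadrature se} and Lemma \ref{franks}, and the same treatment of the nonlinearity as a quadratic Taylor remainder. If anything you are more explicit than the paper about the final absorption/bootstrap step and about the need for node-uniform constants in the quadrature bound, both of which the paper passes over silently.
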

\begin{proof}
To find an upper error bound, we subtract \eqref{asli22} from \eqref{10} and obtain 
\begin{equation*}
u-u_{N}=\mathcal{K}u-
\mathcal{P}_{N}\mathcal{K}_{N}u_{N}+g-\mathcal{P}_{N}g.
\end{equation*}
The aforementioned relation is rewritten as 
\begin{equation}\begin{array}{cl} u-u_{N} & =
(I-\mathcal{P}_{N}(\mathcal{K}'_{N})(u))^{-1}\big\{(g-\mathcal{P}_{N}g) \\ & \\
& +(\mathcal{K}u-\mathcal{P}_{N}\mathcal{K}u)
+\mathcal{P}_{N}(\mathcal{K}u-\mathcal{K}_{N}u)\\ & \\
& +
\mathcal{P}_{N}
(\mathcal{K}_{N}u-\mathcal{K}_{N}u_{N}
-(\mathcal{K'}_{N})(u)(u-u_{N})) \big\}.
\end{array}\end{equation}
Finally, the following relation is obtained 
 \begin{equation}\begin{array}{cl} \|u-u_{N}\| & \leq
\|(I-\mathcal{P}_{N}(\mathcal{K}'_{N})(u))^{-1}\|\big\{\|g-\mathcal{P}_{N}g\| \\ & \\
& +\|\mathcal{K}u-\mathcal{P}_{N}\mathcal{K}u\|
+\|\mathcal{P}_{N}\|\|\mathcal{K}u-\mathcal{K}_{N}u\|\big\}
+\|\mathcal{P}_{N}\|\mathcal{O}(\|u-u_{N}\|^{2}).
\end{array}
\end{equation}
Because of $g, \mathcal{K}u\in
\mathcal{M}_{\lambda}(\varphi_{a,b}(\mathcal{ D}_{d}))$ and Theorem \ref{interpolationse}, we obtain
$$\|g-\mathcal{P}_{N}g\|\leq C_{1}\sqrt{N} \exp(-\sqrt{\pi d \lambda N}),$$
$$\|\mathcal{K}u-\mathcal{P}_{N}\mathcal{K}u\|\leq C_{2}\sqrt{N} \exp(-\sqrt{\pi d \lambda N}).$$
By using Theorem \ref{quadrature se}, we conclude that 
$$\|\mathcal{K}u-\mathcal{K}_{N}u\|\leq C_{3}
\exp(-\sqrt{\pi d \lambda N}),$$
and in final, we find un upper bound for  $\|\mathcal{P}_{N}\|$ by Lemma \ref{franks}.
Hence, $$\|u-u_{N}\|\leq C \xi_{N} \log(N+1)\sqrt{N}\exp(-\sqrt{\pi d \lambda N}).$$
\end{proof}
%-----------------------------------------------------------------------------------------------
\subsection{Sinc-convolution}
Let $f(t)$  be a function with singularity at the origin and $g(t)$ be a function with singularities at both endpoints. The method of sinc-convolution is based on an accurate approximation of following integrals
%-----------------------------------------------------------------------------------------------------------
\begin{equation}\label{convints}
\begin{split}
p(s)=\int^{s}_{a} f(s-t)g(t)\mathrm{d}t, \quad  s \in (a,b),\\
q(s)=\int^{b}_{s} f(t-s)g(t)\mathrm{d}t,\quad s \in (a,b),
\end{split}
\end{equation}
%------------------------------------------------------------------------------------------------------------
and then, it  could be used to approximate the definite convolution integral
\begin{equation}
\int^{b}_{a}f(\vert s-t \vert)g(t)\mathrm{d}t.
\end{equation}
In order to make it sense, the following notations are defined. 

%-----------------------------------------------------------------------------------------------------------------
\begin{definition}\label{def2}
	For a given positive integer  $ N $, let $ D_{N} $ and $ V_{N} $ denote linear operators acting on function $ u $ by 
	%---------------------------------------------------------------------------------------------------
	\begin{equation}\label{diag}\begin{split}
	D_{N}u=& \diag[u(t_{-N}), \ldots, u(t_{N})] ,\\ 
	V_{N}u= & (u(t_{-N}), \ldots, u(t_{N}))^{T},
	\end{split}\end{equation}
	where the superscript $ T $ specifies the transpose and $ \diag $ symbolizes the diagonal matrix. Set the basis functions as follow
	\begin{equation}
	\begin{split}
	\gamma_{j}(t)&=S(j,h)(\varphi_{a,b}(t)), \quad j=-N, \ldots, N,\\
	\omega_{j}(t)&=\gamma_{j}(t),        \quad  j= -N, \ldots , N,\\
	\omega_{-N}(t)&=\frac{b-t}{b-a} - \sum_{j= -N+1}^{N}\frac{1}{1+e^{jh}}\gamma_{j}(t),\\
	\omega_{N}(t)&=\frac{t-a}{b-a} - \sum_{j= -N}^{N-1} \frac{e^{jh}}{1+e^{jh}}\gamma_{j}(t).
	\end{split}\end{equation} 
\end{definition}
%----------------------------------------------------------------------------------------------------------------
With the aid of these basis functions for a given vector $ \textbf{c}=(c_{-N}, \ldots, c_{N})^{T} $, we consider   a linear combination symbolized by $ \Pi_{N}$ as follows
\begin{equation}
(\Pi_{N}\textbf{c})(t)=\sum_{j=-N}^{N}c_{j}\omega_{j}(t).
\end{equation}
%------------------------------------------------------------------------------------------------------------------
Let us define the interpolation operator $ \mathcal{P}^{c}_{N}:\mathcal{M}_{\lambda}(\mathcal{D}) \rightarrow X_{N}=\spn\{\omega_{j}(t)\}_{j=-N}^{N} $ as follows 
\[ \mathcal{P}^{c}_{N}f(t) = \sum_{j=-N}^{N}f(t_{j}) \omega_{j}(t), \]
where $ t_{j} $s are defined in (\ref{sincpoints}).
%------------------------------------------------------------------------------------------------------------------
The numbers $ \sigma_{k} $ and $ e_{k} $ are determined by 
%------------------------------------------------------------------------------------------------------------------
\begin{equation}\label{sigma}
\begin{split}
\sigma_{k}= & \int^{k}_{0} \sinc(t) \mathrm{d}t, \quad k \in \mathbb{Z},\\ 
e_{k}= & \frac{1}{2}+\sigma_{k}.
\end{split}
\end{equation}
%---------------------------------------------------------------------------------------------------------------
Set an $ (2N+1) \times (2N+1) $ (Toeplitz) matrix $ I^{(-1)}=[e_{i-j}] $ where $ e_{i-j} $ represents the $ (i,j)^{th} $ element of $ I^{(-1)} $. In addition, the operators $ \mathcal{I}^{+} $ and  $ \mathcal{I}^{-} $ are specified as follows
\begin{equation}\label{sigma}\begin{split}
(\mathcal{I}^{+} g)(t) & = \int_{a}^{t}g(s)\mathrm{d}s,\\ 
(\mathcal{I}^{-} g)(t) & = \int_{t}^{b}g(s)\mathrm{d}s.
\end{split}\end{equation}
The following discrete operators $\mathcal{I}^{+}_{N}$ and $\mathcal{I}^{-}_{N}$ approximate the operators $\mathcal{I}^{+}$ and $\mathcal{I}^{-}$ as
\begin{equation}\label{sigma1}\begin{split}
(\mathcal{I}^{+}_{N} g)(t) & = \Pi _{N} \textbf{A}^{(1)} V_{N} g(t), \hskip 0.4cm \textbf{A}^{(1)}=h I^{(-1)} D_{N}(\dfrac{1}{\varphi'_{a,b}}),\\
(\mathcal{I}^{-}_{N} f)(t) & = \Pi _{N}\textbf{A}^{(2)} V_{N} g(t), \hskip 0.4cm \textbf{A}^{(2)}=h (I^{(-1)})^{T} D_{N}(\dfrac{1}{\varphi_{a,b}'}).
\end{split}\end{equation}
%--------------------------------------------------------------------------------------------------------------
For a function $ f $, the operator $ \mathcal{F}[f](s) $ is defined by
\begin{equation}\label{laplace}
\mathcal{F}[f](s)=\int_{0}^{c}e^{\frac{-t}{s}}f(t)\mathrm{d}t,
\end{equation}
and it is assumed that  Eq. (\ref{laplace}) is well-defined for some $ c \in [b-a, \infty ] $ and for all $ s $ on the right half of the complex plane, $ \Omega^{+}=\{z \in \mathbb{C } : \Re( z)>0 \}. $

Sinc-convolution method provides formulae of high accuracy and allows $f(s)$ to have an integrable singularity at $s=b-a$ and for $g$ to have singularities at both endpoints of $(a,b)$ \cite{stenger2016handbook}. This property of the sinc-convolution makes it suitable to approximate the weakly singular integral equations.

Now for convenience, some useful theorems related to sinc-convolution method are introduced.
%------------------------------------------------------------------------------------------------------------------
The following theorem predicts the convergence rate of the sinc convolution method.
%-------------------------------------------------------------------------------------------------------------
\begin{theorem}\label{theocon}(\cite{stenger2016handbook})
	(a) Suppose that the integrals $ p(t) $ and $ q(t) $ in (\ref{convints}) exist and are uniformly bounded on  $ (a,b) $, and let $\mathcal{F} $ be defined as (\ref{laplace}). Then the following operator identities hold
	\begin{equation}\label{asli2}
	p=\mathcal{F}(\mathcal{I}^{+}) g , \quad q=\mathcal{F}(\mathcal{I}^{-}) g.
	\end{equation}
	(b) Assume that $ \dfrac{g}{\varphi_{a,b}} \in \mathcal{L}_{\lambda}(\mathcal{D})$. If for some positive $ C '$ independent of $ N $, we have $ \vert \mathcal{F}'(s) \vert \leq C'$ for all $ \Re( s) \geq 0 $, then there is a constant $ C $ which is independent of $ N $ such that 
	\begin{equation} \begin{split}
	\parallel p - \mathcal{F}(\mathcal{I}^{+}_{N}) g \parallel & \ \leq C \sqrt{N} \exp (-\sqrt{\pi \lambda d N}),\\
	\parallel q - \mathcal{F}(\mathcal{I}^{-}_{N}) g \parallel & \ \leq C \sqrt{N}\exp (-\sqrt{\pi \lambda d N}). \end {split}
	 \end{equation}
  \end{theorem}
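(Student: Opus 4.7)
The plan is to treat parts (a) and (b) separately. Part (a) is an operator identity that requires a functional-calculus interpretation of $\mathcal{F}(\mathcal{I}^\pm)$, while part (b) is a quantitative estimate obtained by transferring the scalar bound on $\mathcal{F}'$ through a diagonalization of the discrete convolution matrix.

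For part (a), I would identify $\mathcal{I}^+$ as the right inverse of the differentiation operator $D$ on functions vanishing at $a$, so that the semigroup generated by $(\mathcal{I}^+)^{-1}=D$ acts as the right-translation $(e^{-tD}g)(x)=g(x-t)$ for $x-t\geq a$ and zero otherwise. Substituting $\mathcal{I}^+$ for $s$ in the definition \eqref{laplace}, via the formal identity $e^{-t/s}\big|_{s=\mathcal{I}^+}=e^{-tD}$, and invoking Fubini yields
\[
\mathcal{F}(\mathcal{I}^+)g(x)=\int_{0}^{c}f(t)\,(e^{-tD}g)(x)\,\mathrm{d}t=\int_{0}^{\min(c,\,x-a)}f(t)g(x-t)\,\mathrm{d}t,
\]
and because $c\geq b-a$ and $x\in(a,b)$, the change of variables $s=x-t$ recovers $p(x)$. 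The identity $q=\mathcal{F}(\mathcal{I}^-)g$ follows analogously, since $\mathcal{I}^-$ is the right inverse of $-D$ with boundary condition at $b$ and generates the left-translation semigroup $g(x)\mapsto g(x+t)$.

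For part (b), I would start from $p-\mathcal{F}(\mathcal{I}^+_N)g=\mathcal{F}(\mathcal{I}^+)g-\mathcal{F}(\mathcal{I}^+_N)g$ by virtue of (a) and split the right-hand side as
\[
\bigl(\mathcal{F}(\mathcal{I}^+)g-\mathcal{P}^{c}_{N}p\bigr)+\bigl(\mathcal{P}^{c}_{N}p-\mathcal{F}(\mathcal{I}^+_N)g\bigr).
\]
The first term is a pure sinc-interpolation error, controlled by Theorem \ref{interpolationse} once one checks that $p$ inherits, from $g/\varphi_{a,b}\in\mathcal{L}_\lambda(\mathcal{D})$ and the smoothing action of convolution with $f$, membership in $\mathcal{M}_\lambda(\mathcal{D})$. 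For the second term, I would diagonalize the Toeplitz matrix $I^{(-1)}=X\Lambda X^{-1}$ with $\Lambda=\diag(\mu_k)$, so that $\mathcal{F}$ applied to $\textbf{A}^{(1)}=hI^{(-1)}D_{N}(1/\varphi'_{a,b})$ becomes computable through the scalar values $\mathcal{F}(h\mu_k/\varphi'_{a,b}(t_k))$. The mean-value estimate afforded by $|\mathcal{F}'(s)|\leq C'$ on $\Omega^+$ then reduces the bound to the sinc indefinite-integration error for $g/\varphi_{a,b}\in\mathcal{L}_\lambda(\mathcal{D})$, which is of order $\sqrt{N}\exp(-\sqrt{\pi\lambda dN})$ by Theorem \ref{quadrature se}.

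The decisive obstacle is the passage from the scalar bound $|\mathcal{F}'|\leq C'$ to an operator-norm estimate with an $N$-independent prefactor. Two subsidiary facts must be established along the way: first, that the spectrum $\{\mu_k\}$ of $I^{(-1)}$ lies in the right half-plane $\Omega^+$, so the hypothesis on $\mathcal{F}'$ applies pointwise to every eigenvalue; and second, that the condition number of the diagonalizer $X$ is bounded independently of $N$, failing which the prefactor would swamp the exponential decay. Both of these rest on the explicit Toeplitz structure $I^{(-1)}=[e_{i-j}]$ with the $e_k$ from \eqref{sigma} and, in my view, form the technical heart of Stenger's original argument.
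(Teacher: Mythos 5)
This theorem is quoted by the paper directly from Stenger's handbook \cite{stenger2016handbook}; the paper supplies no proof of its own, so there is nothing internal to compare your argument against. Judged on its own terms, your sketch of part (a) is the standard operational-calculus derivation and captures the right idea: $\mathcal{I}^{+}$ is a right inverse of $D$ with the boundary condition at $a$, the formal substitution $e^{-t/s}\big|_{s=\mathcal{I}^{+}}=e^{-tD}$ produces the truncated translation semigroup, and Fubini recovers $p$. The one point you gloss over is that this functional calculus is not innocent: $\mathcal{I}^{+}$ is quasinilpotent, so $0$ lies in its spectrum while $e^{-t/s}$ is singular there, and the rigorous version of (a) in Stenger's work goes through resolvent/Laplace-inversion identities rather than a literal substitution. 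As a proof sketch this is acceptable, but it is the place where "formal identity" would need to become an argument.

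The genuine gap is in part (b). Your route runs the error estimate through the diagonalization $\textbf{A}^{(1)}=X S X^{-1}$ and therefore needs two facts that you name but do not establish: that the eigenvalues $s_{k}$ lie in $\overline{\Omega^{+}}$ so that the hypothesis $|\mathcal{F}'(s)|\leq C'$ applies to them, and that the condition number of $X$ is bounded uniformly in $N$ so that the scalar bound survives conjugation. The first is Stenger's eigenvalue conjecture for $I^{(-1)}$, which was open for decades and is a theorem only by later work; the second is, to the best of current knowledge, simply not available --- numerically the eigenvector matrices of $\textbf{A}^{(1)}$ become badly conditioned as $N$ grows, and Stenger's published error analysis deliberately avoids passing the estimate through $X$ (the diagonalization is used only for the computational implementation, while the error bound is obtained from an operator mean-value identity of the form $\mathcal{F}(\mathcal{I}^{+})-\mathcal{F}(\mathcal{I}^{+}_{N})=\int_{0}^{1}\mathcal{F}'\bigl(\mathcal{I}^{+}_{N}+\theta(\mathcal{I}^{+}-\mathcal{I}^{+}_{N})\bigr)(\mathcal{I}^{+}-\mathcal{I}^{+}_{N})\,\mathrm{d}\theta$ combined with the sinc indefinite-integration error $\|\mathcal{I}^{+}-\mathcal{I}^{+}_{N}\|\leq C\sqrt{N}\exp(-\sqrt{\pi\lambda dN})$ and a numerical-range argument to convert the scalar bound on $\mathcal{F}'$ into an operator bound). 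So your decomposition of the error is fine, but the step "the mean-value estimate afforded by $|\mathcal{F}'(s)|\leq C'$ then reduces the bound to the sinc indefinite-integration error" is exactly the step that fails along the diagonalization route and must be rerouted; as written, the proposal is an accurate map of the difficulties rather than a proof.
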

%---------------------------------------------------------------------------------------------------------------
\subsubsection{Sinc-convolution scheme}
In order to practical use of the convolution method, it is assumed that the dimension of matrices, $ 2N+1 $, is such that the matrices $ \textbf{A}^{(1)} $ and $ \textbf{A}^{(2)} $ are diagonalizable \cite{stenger2016handbook} as follows
\begin{equation}\label{diag1}
\textbf{A}^{(j)}=X^{(j)}S(X^{j})^{-1}, \quad j=1, 2,
\end{equation} 
where 
\begin{equation} \begin{split}
S &= \diag (s_{-N}, \ldots , s_{N} ),\\
X^{(1)}&= [x_{k,l}], \hskip 0.3cm (X^{(1)})^{-1}=[x^{k,l}],\\
X^{(2)}&= [\xi_{k,l}], \hskip 0.3cm (X^{(2)})^{-1}=[\xi^{k,l}].
\end {split}\end{equation}
The integral at Eq.(\ref{asli}) has been split into the following integrals
\begin{equation}\label{split}
\int^b_a \vert t-s \vert ^ {-\lambda} k(t, s, u(s)) \mathrm{d}s=\int^t_a \vert t-s \vert ^ {-\lambda} k(t,s,u(s)) \mathrm{d}s + \int^b_t \vert t-s \vert ^ {-\lambda} k(t,s,u(s)) \mathrm{d}s.
\end{equation}
%---------------------------------------------------------------------------------------------------------------
Based on formulae (\ref{sigma1}), the two discrete nonlinear operators are defined
\begin{equation}
\begin{split}
(\mathcal{K}_{N}^{1}u)(t) & = \Pi _{N} \textbf{A}^{(1)} V_{N}k(t,s,u(s)), \\
(\mathcal{K}_{N}^{2}u)(t) & =\Pi _{N} \textbf{A}^{(2)} V_{N}k(t,s,u(s)).
\end{split}
\end{equation}
%----------------------------------------------------------------------------------------------------------
The approximate solution takes the form
\[u^{c}_{N}(t)=\sum_{j=-N}^{N}c_{j}\omega_{j}(t),\]
where $ c_{j} $s are unknown coefficients to be determined.
The integrals in right hand side of (\ref{split}) are approximated by the formulae (\ref{sigma1}), (\ref{asli2}) and (\ref{diag1}). We substitute these approximations in (\ref{asli}) and then the approximated equation is collocated at the sinc points. This process reduces solving (\ref{asli}) to solving the following finite dimensional system of equations 
\begin{equation}\label{nonsys}\begin{split}
c_{j}-& \sum_{k=-N}^{N}x_{j,k}\sum_{l=-N}^{N} x^{k,l}\mathcal{F}(s_{k})k(z_{j}, z_{l},c_{l} )\\
- & \sum_{k=-N}^{N}\xi_{j,k}\sum_{l=-N}^{N}\xi^{k,l}\mathcal{F}(s_{k}) k(z_{j}, z_{l}, c_{l}) = y(z_{j}),
\end{split}\end{equation} for $j=-N, \ldots, N.$\\

Equation (\ref{nonsys}) can be expressed in the operator notation as follows
\begin{equation}\label{disequ}
u^{c}_{N} - \mathcal{P}^{c}_{N} \mathcal{K}_{N}^{1}u^{c}_{N} - \mathcal{P}^{c}_{N} \mathcal{K}_{N}^{2}u^c_{N} = \mathcal{P}^{c}_{N}y
\end{equation}
%-----------------------------------------------------------------------------------------------------------
\subsubsection{Convergence analysis}
The convergence analysis of  sinc-convolution method is discussed in this section. The main result is formulated in the following theorem.
%--------------------------------------------------------------------------------------------------------------
\begin{theorem}
	Suppose that $ u(t) $ be an exact solution of Eq. \eqref{asli} and the kernel $ k $ satisfies the Lipschitz condition with respect to the third variable by $ L $. Also, let the assumptions of Theorem \ref{theo3} be fulfilled. Then there is a constant $ C $ which is independent of $ N $ such that 
	\[\Vert u - u^{c}_{N} \Vert \leq C \sqrt{N} \log (N) \exp (-\sqrt{\pi d \lambda N}).
	\]\end{theorem}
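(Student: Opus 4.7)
The proof should mirror the sinc-collocation error analysis of the preceding theorem. The plan is to subtract the exact equation from its discrete counterpart \eqref{disequ}, decompose the residual into interpolation error, convolution-quadrature error, and a nonlinear piece, then estimate each term with an earlier result.

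First I would write, after splitting the convolution at $t$ as in \eqref{split},
\begin{equation*}
u - u_N^c = (y - \mathcal{P}_N^c y) + \sum_{i=1}^{2}\Bigl[(\mathcal{K}^{i} u - \mathcal{P}_N^c \mathcal{K}^{i} u) + \mathcal{P}_N^c(\mathcal{K}^{i} u - \mathcal{K}_{N}^{i} u) + \mathcal{P}_N^c(\mathcal{K}_{N}^{i} u - \mathcal{K}_{N}^{i} u_N^c)\Bigr].
\end{equation*}
Theorem \ref{theo3} guarantees $\mathcal{K}^{i} u \in \mathcal{M}_{1-\lambda}(\varphi_{a,b}(\mathcal{D}_d))$; combined with the assumption $y \in \mathcal{M}_{\lambda}$ inherited from the preceding theorem, Theorem \ref{interpolationse} then bounds the two interpolation pieces by $C\sqrt{N}\exp(-\sqrt{\pi d \lambda N})$. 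The convolution-quadrature differences $\mathcal{K}^{i} u - \mathcal{K}_{N}^{i} u$ are precisely the errors controlled by Theorem \ref{theocon}, contributing another $C\sqrt{N}\exp(-\sqrt{\pi d \lambda N})$, while Lemma \ref{franks} supplies $\|\mathcal{P}_N^c\| \leq C\log(N)$; this is the source of the logarithmic factor in the final bound.

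For the nonlinear piece I would exploit the Lipschitz assumption on $k$ in its third argument: it transports entrywise through the sampling $V_N k(t,\cdot,u(\cdot))$, then through the fixed matrices $\mathbf{A}^{(i)}$ (whose spectral norms are controlled via the diagonalization \eqref{diag1}), yielding
\begin{equation*}
\|\mathcal{K}_{N}^{i} u - \mathcal{K}_{N}^{i} u_N^c\| \leq C L \,\|u - u_N^c\|.
\end{equation*}
Multiplying by $\|\mathcal{P}_N^c\|$, absorbing this term into the left-hand side, and collecting all pieces gives the claimed estimate.

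The main obstacle is precisely this stability step. Unlike the sinc-collocation theorem, the statement here carries no explicit $\xi_N = \|(I - \mathcal{P}_N^c(\mathcal{K}_N)'(u))^{-1}\|$ factor, so the absorption is not automatic: I would need to establish that $I - \mathcal{P}_N^c((\mathcal{K}_{N}^{1})'(u) + (\mathcal{K}_{N}^{2})'(u))$ is invertible with a norm bound uniform in $N$. The natural route is a collective-compactness argument: the geometric isolation of $u$ together with Theorem \ref{theo3} gives invertibility of $I - (\mathcal{K}^1+\mathcal{K}^2)'(u)$, and one then transfers this to the discrete operator for $N$ large by combining pointwise convergence of $\mathcal{P}_N^c(\mathcal{K}_{N}^{i})'(u)$ to $(\mathcal{K}^{i})'(u)$ with the collective compactness of $\{\mathcal{P}_N^c(\mathcal{K}_{N}^{i})'(u)\}_{N}$. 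A cheaper alternative, implicit in the Lipschitz hypothesis, is a smallness assumption on $L$ that keeps the $L \log(N)$ factor below one for $N$ large; this is the minimal ingredient needed to suppress $\xi_N$ and obtain the cleaner bound stated in the theorem.
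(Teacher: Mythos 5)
Your proposal follows essentially the same route as the paper: the same splitting into interpolation error (Theorem \ref{interpolationse}), convolution--quadrature error (Theorem \ref{theocon} combined with $\Vert \mathcal{P}^{c}_{N}\Vert \leq C\log N$ from Lemma \ref{franks}), and a Lipschitz term $C_{1}\Vert u-u^{c}_{N}\Vert$ that must be absorbed into the left-hand side. The stability issue you flag is genuine but is not resolved in the paper either --- its proof simply writes $\Vert \mathcal{K}^{1}u-\mathcal{K}^{1}u^{c}_{N}\Vert \leq C_{1}\Vert u-u^{c}_{N}\Vert$ and then states the final bound, which tacitly requires exactly the smallness of the Lipschitz constant (or the uniform invertibility of the linearized discrete operator) that you identify as the missing ingredient.
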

%--------------------------------------------------------------------------------------------------------------
\begin{proof}
	By subtracting Eq. \eqref{asli} from Eq. \eqref{disequ},  the following term has been obtained 
	\[
	\Vert u- u^{c}_{N} \Vert \leq \Vert \mathcal{K}^{1}u - \mathcal{P}^{c}_{N}\mathcal{K}_{N}^{1}u^{c}_{N}\Vert + \Vert \mathcal{K}^{2}u - \mathcal{P}^{c}_{N}\mathcal{K}_{N}^{2}u^{c}_{N} \Vert + \Vert y- \mathcal{P}^{c}_{N}y \Vert.
	\]
	Acquiring an upper bound for the first and second terms is almost the same. For this aim, the first term is rewritten as follows
	\[
	\mathcal{K}^{1}u - \mathcal{P}^{c}_{N}\mathcal{K}_{N}^{1}u^{c}_{N} = \mathcal{K}_{1}u - \mathcal{P}^{c}_{N}\mathcal{K}_{1}u^{c}_{N} + \mathcal{P}^{c}_{N}\mathcal{K}_{1}u^{c}_{N} - \mathcal{P}^{c}_{N}\mathcal{K}_{N}^{1}u^{c}_{N},
	\]
	so we have 
	\begin{equation}\begin{split}
	\Vert \mathcal{K}^{1}u - \mathcal{P}_{N}\mathcal{K}_{N}^{1}u^{c}_{N}\Vert & \leq
	\Vert \mathcal{K}^{1}u - \mathcal{K}_{1}u^{c}_{N} \Vert \\
	& + \Vert \mathcal{K}^{1}u^{c}_{N} - \mathcal{P}_{N}\mathcal{K}^{1}u^{c}_{N}\Vert +\Vert \mathcal{P}^{c}_{N} \Vert \Vert \mathcal{K}^{1}u^{c}_{N} - \mathcal{K}_{N}^{1}u^{c}_{N}\Vert,
	\end{split}\end{equation}
	where the second term is bounded by Theorem \ref{interpolationse}. 
	Due to the Lipschitz condition, the first term is bounded by
	\begin{equation*}
	\Vert \mathcal{K}^{1}u - \mathcal{K}^{1}u^{c}_{N} \Vert \leq C_{1} \Vert u - u^{c}_{N} \Vert,
	\end{equation*}
	where $ C_{1} $ is a suitable constant.
	In addition,  Lemma \ref{franks} and Theorem \ref{theocon} help us to find the following upper bound
	\begin{equation}
	\Vert \mathcal{P}^{c}_{N} \Vert \Vert \mathcal{K}_{1}u^{c}_{N} - \mathcal{K}_{N}^{1}u^{c}_{N}\Vert \leq C_{2} \sqrt{N} \log (N) \exp (-\sqrt{\pi d \lambda N}).
	\end{equation}
	Finally, we get
	\begin{equation}
	\Vert u - u^{c}_{N} \Vert \leq  C \sqrt{N} \log (N) \exp (-\sqrt{\pi d \lambda N}).
	\end{equation}
\end{proof}
%----------------------------------------------------------------------------------------------------------------
\section {Numerical experiments}\label{numeric}
This section is devoted to the  numerical experiments concerning the accuracy and the rate of convergence of the presented methods in the paper.
The proposed algorithms are executed  in {Mathematica}$^{\circledR}$. 
 To solve the  nonlinear systems which arise in the formulation of proposed methods,  we have utilized Newton iteration method. In order to find an initial guess for the Newton procedure, the steepest descent method is employed which is less sensitive to the initial guess \cite{burden2001numerical}.  The convergence rate of the sinc convolution and sinc-convolution methods depends on two parameters $\alpha$ and $d$. Specifically, the parameter $d$ demonstrate  the size of the holomorphic domain of $ u $. In all examples, the parameter $\alpha$  is determined by {Theorem} \ref{theo3} and  $d$ is evaluated $3.14$. Furthermore the parameter $ c $ in formula (\ref{laplace}) takes infinity. 
%-----------------------------------------------------------------------------------------------------------------
\begin{example}(\cite{pedasvainikko, mandal2019superconvergence})\label{ex:1}
Let us examine the integral equation
\begin{equation}
u(t) - \int_{0}^{1} \vert t - s \vert^{\frac{-1}{2}} u^{2}(s) \mathrm{d}s = g(t), \hskip0.5cm t \in (0, 1),
\end{equation}
where 
\[\begin{split}
g(t) & = [t(1-t)]^{\frac{1}{2}} + \frac{16}{15}t^{\frac{5}{2}} + 2t^{2}(1-t)^{\frac{1}{2}}\\
&+\frac{4}{3}t(1-t)^{\frac{3}{2}} + \frac{2}{5}(1-t)^{\frac{5}{2}}\\
& -\frac{4}{3}t^{\frac{3}{2}} - 2t(1-t)^{\frac{1}{2}} - \frac{2}{3}(1-t)^{\frac{3}{2}},
\end{split}\]
with the exact solution $ u(t) = \sqrt{t(1-t)} $.  The exact solution has singularity near the zero. The numerical results have been shown by { Figure} \ref{figex1}. A reported  in \cite{pedasvainikko},  the maximum of the absolute errors at the collocation points for a piece-wise polynomial collocation method  is around $10^{-7}$ due to the super-convergence property of the piece-wise collocation method.  Furthermore, in \cite{mandal2019superconvergence} the authors  have applied the multi-Galerkin method for weakly singular integral equations of Hammerstein type. A comparison between the reported results reveal a better findings for sinc approach. 
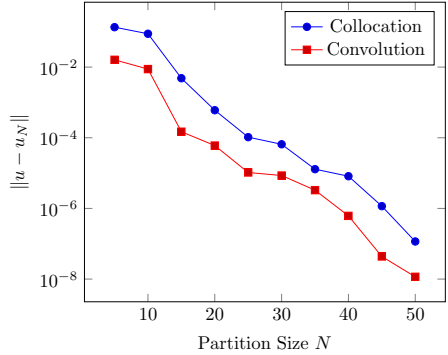
\begin{figure}
	\begin{center}
		\begin{tikzpicture}[scale=0.7]
		\begin{axis}
		[title=, xlabel={Partition Size $N$}, ylabel={$\Vert u-{u}_{N}\Vert$}, legend entries={Collocation,Convolution}, ymode=log, legend pos=north east]
		\addplot coordinates {
		(5,0.1348023655)
		(10,0.08786697)
		(15,0.00484)
		(20,6.02E-04)
		(25,1.04E-04)
		(30, 6.54E-05)
		(35,1.29E-05)
		(40,8.16E-06)
		(45,1.16E-06)
		(50,1.16E-07)
		};
	\addplot coordinates {
		(5,0.016090655)
		(10,0.008786697)
		(15,0.0001484)
		(20,6.02E-05)
		(25,1.04E-05)
		(30,8.54E-06)
		(35,3.29E-06)
		(40,6.16E-07)
		(45,4.36E-08)
		(50,1.16E-08)
	};
		\end{axis}
		\end{tikzpicture}
	\end{center}
	%----------------------------------
	\caption{Plots of the absolute error for sinc-convolution and sinc-collocation methods for Example \ref{ex:1}.  }
	\label{figex1}
\end{figure}
\end{example}
%----------------------------------------------------------------------------------------------------------------
\begin{example}(\cite{tamme2})\label{ex:2}
In this example, we  the following integral equation is consider
\begin{equation}
u(t) - \int_{0}^{1} \vert t - s \vert^{\frac{-1}{4}} u^{2}(s) \mathrm{d}s = g(t), \hskip0.5cm t \in (0, 1).
\end{equation}
The function $ g(t) $ is chosen such that $ u(t) = t^{\frac{3}{2}} $ be the exact solution. The first derivative of the exact solution has singularity near the zero. { Figure} \ref{figex2} shows the error results achieved for the sinc-convolution and sinc-collocation methods which are competetive with the results repoertd in \cite{tamme2}. 
\begin{figure}
	\begin{center}
		\begin{tikzpicture}[scale=0.7]
		\begin{axis}
		[title, xlabel={Partition Size $N$}, ylabel={$\Vert u-{u}_{N}\Vert$}, legend entries={Collocation, Convolution}, ymode=log, legend pos=north east]
		\addplot coordinates {
			(5,0.16090655)
			(10,0.009786697)
			(15,0.001484)
			(20,6.02E-04)
			(25,1.04E-04)
			(30, 9.54E-05)
			(35,1.29E-05)
			(40,8.16E-06)
			(45,1.16E-06)
			(50,1.16E-07)
		};
		\addplot coordinates {
			(5,0.021490655)
			(10,0.0058786697)
			(15,0.00053484)
			(20,1.04E-05)
			(25,6.34E-06)
			(30, 2.34E-06)
			(35,7.29E-07)
			(40,2.36E-07)
			(45,9.26E-08)
			(50,5.726E-09)
		};
		\end{axis}
		\end{tikzpicture}
	\end{center}
	%----------------------------------
	\caption{Plots of the absolute error for sinc-convolution and sinc-collocation methods for Example \ref{ex:2}.}
	\label{figex2}
\end{figure}
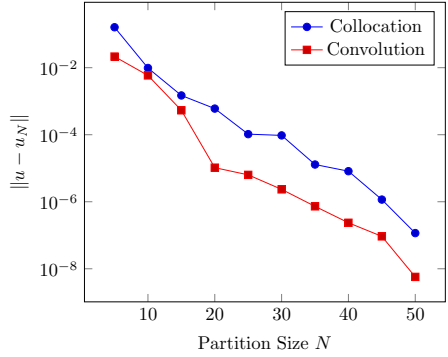
\end{example}
%---------------------------------------------------------------------------------------
\begin{example}\label{ex3}
	 Consider the integral equation 
	 \begin{equation*}
	 u(t) - \int_{0	}^{1} \vert t-s \vert^{\frac{-1}{2}} \cos(s+u(s))\mathrm{d}s = g(t),
	 \end{equation*}
where $g(t)$ is selected so that $u(t)= \cos(t)$. This example with an infinitely smooth solution is discussed in \cite{MALEKNEJAD20151}.  Here we compare sinc-collocation and sinc-convolution solutions.  Figure \ref{figex3} depicts better results for sinc-convolution approach in comparison with sinc-collocation method.
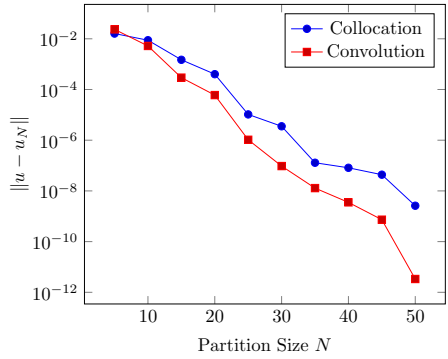
\begin{figure}
	\begin{center}
		\begin{tikzpicture}[scale=0.7]
		\begin{axis}
		[title=, xlabel={Partition Size $N$}, ylabel={$\Vert u-{u}_{N}\Vert$}, legend entries={Collocation,Convolution}, ymode=log, legend pos=north east]
		\addplot coordinates {
			(5,0.016090655)
			(10,0.008786697)
			(15,0.001484)
			(20,4.02E-04)
			(25,1.04E-05)
			(30,3.54E-06)
			(35,1.29E-07)
			(40,8.16E-08)
			(45,4.36E-08)
			(50,2.6E-09)
		};
		\addplot coordinates {
			(5,0.02354985)
			(10,0.00521829)
			(15,0.00029384)
			(20,6.02E-05)
			(25,1.04E-06)
			(30,9.54E-08)
			(35,1.29E-08)
			(40,3.53E-09)
			(45,7.326E-10)
			(50,3.35E-12)
		};
		\end{axis}
		\end{tikzpicture}
		\caption{Plots of the absolute error for sinc-convolution and sinc-collocation methods for Example \ref{ex3}. }
	\end{center}
	\label{figex3}
\end{figure}
\end{example}

%-----------------------------------------------------
\begin{example}\label{ex4}
 In this experiment, we explore the sensitivity of the methods to the
	parameter $\lambda\in\left(  0,1\right)  $ in weakly singular integral equation. We
	consider the equation
	\begin{equation*}
	u(t)  - \int_{0}^{1}\frac{1}{\vert t-s \vert^{1-\lambda}}u^{2}(s)\mathrm{d}s = g(t),
	\end{equation*}
	with the exact solution $u_{\lambda}(t)= t^{2-\lambda}$. We choose $\lambda=\frac{k}{10},$ for $k \in \{1, 2, \dots, 9\} $ and the errors for the sinc-convolution method are reported in Figure \ref{figex4}.
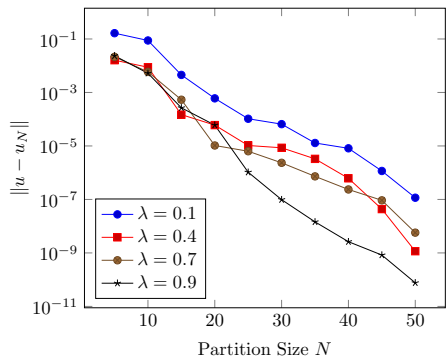
\begin{figure}
\begin{center}
	\begin{tikzpicture}[scale=0.7, transform shape]
	\begin{axis}[title=, xlabel={Partition Size $N$}, log basis x={10},ylabel={$\Vert u -u_{N}\Vert$},ymode=log, legend entries={ $\lambda=0.1$,$\lambda=0.4$,$\lambda=0.7$, $\lambda=0.9$}, legend pos=south west]
	\addplot coordinates {
	(5,0.164723655)
	(10,0.08786697)
	(15,0.00453)
	(20,6.02E-04)
	(25,1.04E-04)
	(30, 6.54E-05)
	(35,1.29E-05)
	(40,8.16E-06)
	(45,1.16E-06)
	(50,1.16E-07)
};
\addplot coordinates {
	(5,0.016090655)
	(10,0.008786697)
	(15,0.0001484)
	(20,6.02E-05)
	(25,1.04E-05)
	(30,8.54E-06)
	(35,3.29E-06)
	(40,6.16E-07)
	(45,4.36E-08)
	(50,1.16E-09)
};

	\addplot coordinates {
	(5,0.021490655)
	(10,0.0058786697)
	(15,0.00053484)
	(20,1.04E-05)
	(25,6.34E-06)
	(30, 2.34E-06)
	(35,7.29E-07)
	(40,2.36E-07)
	(45,9.26E-08)
	(50,5.726E-09)
};
	\addplot coordinates {
	(5,0.02354985)
	(10,0.00521829)
	(15,0.00026632)
	(20,6.02E-05)
	(25,1.04E-06)
	(30,9.81E-08)
	(35,1.45E-08)
	(40,2.63E-09)
	(45,8.326E-10)
	(50,7.63E-11)
};

	\end{axis}
	\end{tikzpicture}
\end{center}
\caption{Plots of the absolute error for sinc-convolution for different values of $\lambda$. }%
\label{figex4}%
\end{figure}

%For a fixed partition size $N=2^{10}$, we display the relative error depending
%on the order of singularity $\lambda$ and see the influence in Figure
%\ref{figex4}.
%-------------------------------------------

\end{example}

%--------------------------------------------------------
\section*{Conclusion}
In this paper, the sinc-collocation and sinc-convolution methods were considered for nonlinear weakly singular Fredholm integral equations, and rigorous proofs of the exponential convergence of the schemes are obtained.
The theoretical arguments show that direct applying of the collocation method with sinc basis functions leads to the parameter $\xi_{N}$ in the error upper bound.  This parameter is unavoidable due to non-uniformly boundedness of  the sinc interpolation operator. So, the numerical method based on sinc convolution is proposed. It is shown both in theory and numerical experiments that convolution method is more accurate and it achieves exponential convergence with respect to $ N $. The main advantage of the sinc methods for the weakly singular kernels is the fact that they disregard the singularity in  boundaries. The matter is worthy of handling  discrete sinc-convolution operators and extending to the case of full implicit integral equations by utilizing double exponential sinc method.
%--------------------------------
\section*{Acknowledgement} We are greatly indebted to Professor Frank Stenger (University of Utah) for helpful discussions and remarks. 
%--------------------------------------------------------------------------------------------------------------------
\bibliographystyle{acm}
\bibliography{mybibfile}
\end{document}